\newcommand{\ot}{\otimes}
\newcommand{\Z}{{\mathbb{Z}}}
\newcommand{\R}{{\mathbb{R}}}
\newcommand{\C}{{\mathbb{C}}}
\newcommand{\cC}{\mathcal{C}}
\newcommand{\cJ}{\mathcal{J}}
\newcommand{\cCi}{\cC^\infty}
\newcommand{\al}{\alpha}
\newcommand{\be}{\beta}
\renewcommand{\phi}{\varphi}
\newcommand{\la}{\langle}
\newcommand{\ra}{\rangle}
\newcommand{\Diff}{\mathrm{Diff}}
\newcommand{\SSS}{\mathrm{S}}
\newcommand{\D}{\mathrm{D}}
\newcommand{\Spin}{\mathrm{Spin}}
\newcommand{\OO}{\mathrm{O}}
\newcommand{\supp}{\mathrm{supp}}
\DeclareMathOperator{\Jac}{Jac}
\DeclareMathOperator{\Ker}{ker}
\DeclareMathOperator{\End}{End}
\DeclareMathOperator{\Id}{Id}
\DeclareMathOperator{\Ima}{Im}
\DeclareMathOperator{\Rea}{Re}
\newtheorem{theorem}{Theorem}[section]
\newtheorem{corollary}[theorem]{Corollary}
\newtheorem{lemma}[theorem]{Lemma}
\newtheorem{proposition}[theorem]{Proposition}
\newtheorem{example}[theorem]{Example}
\newenvironment{customthm}[1]
{\innercustomthm}
{\endinnercustomthm}
\theoremstyle{definition}
\newtheorem{definition}[theorem]{Definition}
\theoremstyle{remark}
\newtheorem{remark}[theorem]{Remark}
\title[New geometric structures on $3$-manifolds]{New geometric structures  on $3$-manifolds:  \\ surgery and generalized geometry}
\author{Joan Porti and Roberto Rubio}
\thanks{This project has been supported by the European Union’s Horizon 2020 research and innovation programme under the Marie Sklodowska-Curie grant agreement No 750885 GENERALIZED and by the AGAUR under the grant 2021-SGR-01015. 
The first author has been partially supported by 
FEDER/AEI/MICIU through the grant PID2021-125625NB-100 and the
Mar\'\i a de Maeztu Program CEX2020-001084-M.
The second author has also received support from the FEDER/AEI/MICIU under the Ramón y Cajal fellowship RYC2020-030114-I and the grant PID2022-137667NA-I00.}
\begin{document}
\maketitle

\begin{abstract}
    Cosymplectic and normal almost contact structures are analogues of symplectic and complex structures that can be defined on $3$-manifolds. Their existence imposes strong topological constraints. Generalized geometry offers a natural common generalization of these two structures: $B_3$-generalized complex structures. We prove that any closed orientable $3$-manifold admits such a structure, which can be chosen to be stable, that is, generically cosymplectic up to generalized diffeomorphism. 
\end{abstract}

\section{Introduction}\label{sec:intro}

Generalized geometry encompasses classical geometric structures  (symplectic and complex structures are particular cases of generalized complex structures) but also offers genuinely new structures: there are closed manifolds that are neither complex nor symplectic but admit a stable generalized complex structure \cite{cavalcanti-gualtieri:2007, cavalcanti-gualtieri:2009}. This is a very tight phenomenon, as a generalized complex manifold must be almost complex. The key for this small window to exist is the type change, which happens from dimension four onwards. A stable generalized complex manifold is generically symplectic but degenerates in a complex way along a codimension-$2$ submanifold. 

Generalized geometry of type $B_n$ is an extension of generalized geometry that offers a whole new realm of structures. $B_n$-generalized complex structures are now possible in odd dimensions (having as particular cases cosymplectic and normal almost contact structures). Type change for stable $B_n$-generalized complex structures or simply $B_n$-structures, which are generically cosymplectic up to generalized diffeomorphism, occurs again in a codimension-$2$ submanifold and is already possible in dimension three. A natural question arises: does there exist a $B_3$-generalized complex manifold that is neither cosymplectic nor normal almost contact? 

We settle this question by proving the following general result.

\begin{customthm}{\ref{thm:every-closed-B3}}    Any closed orientable $3$-manifold admits a $B_3$-structure with exactly two type-change curves.
\end{customthm}

The proof combines mainly two ideas. On the one hand, the introduction of a family of surgeries on a cosymplectic neighbourhood which always results in a twisted $B_3$-structure (a larger class which is key to this work) together with a way to combine surgeries to cancel their twisting. On the other hand, a way of expressing any $3$-manifold as the result of a surgery on a mapping torus of an area-preserving diffeomorphism that fixes an open set. This is a combination of the open-book decomposition, Moser's argument, the transitivity of symplectomorphisms and the Dacorogna-Moser theorem. Technicalities aside, the way to prove Theorem \ref{thm:every-closed-B3} is to endow the corresponding mapping torus with a cosymplectic structure and perform two parallel surgeries, which cancel each other's twisting and whose effect on the diffeomorphism type is that of one surgery. We thus recover the original manifold with a $B_3$-structure. 
In a forthcoming work we will study the geometry and invariants associated to $B_3$-structures.

An extra lesson to be drawn is that, in the case of odd-dimensions, $B_n$-generalized complex structures, expressed as a differential form (of possibly mixed degree) are somehow simpler than their involved classical counterparts (a normal almost contact structure consists of an endomorphism, a vector field and a $1$-form satisfying compatibility and integrability conditions). 

We begin in Section \ref{sec:Bn-gen-cplx} with an account of generalized and $B_n$-generalized complex geometry from the viewpoint of differential forms. As far as we know, this is a novel way of presenting the subject, first used in the talk \cite{talk-poisson}. In Section \ref{sec:surgery} we introduce the family of generalized surgeries that we will use. With these we produce the first interesting examples, which we describe in Section \ref{sec:examples}. The proof of Theorem \ref{thm:every-closed-B3} is in Section~\ref{sec:every-closed-B3}.

\bigskip
\noindent \textbf{Notation: } We work on the smooth category. For simplicity, when working over a manifold $M$ we use $\Omega^k$ for the $k$-forms and $\Omega^k_{cl}$ for the closed $k$-forms, which may be defined locally.   
When a differential form, say $\rho$, is nowhere vanishing, we use the notation $\rho\neq 0$.


\bigskip
\noindent \textbf{Acknowledgements: } R.R. would like to thank Nigel Hitchin for suggesting the study of $3$-manifolds, Marco Gualtieri for helpful comments, and the Weizmann Institute of Science for its hospitality.


    




\section{$B_n$-generalized complex structures}
\label{sec:Bn-gen-cplx}

This section is a short introduction to $B_n$-generalized complex structures. For the standard viewpoint and more details, check \cite{rubio:2014} or, for usual generalized geometry, \cite{gualtieri-PhD:2004, gualtieri-annals:2011}.


\subsection{Generalized complex geometry through differential forms}\label{sec:gengeo-through-forms}

Many geometric structures are or can be described through differential forms, for instance, symplectic and complex structures. Consider a $2m$-dimensional manifold $M$. A symplectic structure is a $2$-form $\omega$ that is non-degenerate ($\omega^m\neq 0$) and closed ($d\omega=0$). On the other hand, a complex structure $J$ on $M$ is equivalently described by a subbundle $K_J\subset \wedge^m T^*_\C M$ admitting non-vanishing local sections $\zeta$ that are decomposable ($\zeta=\theta_1\wedge\ldots\wedge \theta_m$ for some local complex $1$-forms $\theta_j$), satisfy a non-degeneracy condition ($\zeta\wedge \overline{\zeta}\neq 0$) and are closed ($d\zeta=0$). Note that $\zeta$ plays the role of an $(m,0)$-form, whose annihilator is precisely the $-i$-eigenbundle of~$J$.

Both symplectic and complex structures can be put on the same footing by taking advantage of the fact that $\wedge^\bullet T^*_\C M$ is a Clifford module for $T_\C M\oplus T^*_\C M$, which we recall briefly. We fix the notation $X+\al$ (or $Y+\be$) and $\rho$ (or $\psi$) for the elements or local/global sections of $T_\C M\oplus T^*_\C M$  and $\wedge^\bullet T^*_\C M$, respectively. Pointwise, the natural pairing on $T_\C M\oplus T^*_\C M$ is determined by $\la X+\al,X+\al\ra=\alpha(X)$ and the Clifford action is
$$(X+\al)\cdot \rho := \iota_X \rho + \al\wedge \rho.$$
Since this is essentially the spinor representation, we refer to $\rho$ (or $\psi$) as spinors. The annihilator of a spinor is isotropic  for $\la\, , \ra$. The spinors having a maximally isotropic annihilator are called pure and are, pointwise, a nonzero multiple of the form 
 \begin{equation}
     \label{eq:pure}  \rho=e^{B+i\omega}\theta_1 \wedge \ldots \wedge \theta_k
 \end{equation}
for some $B$, $\omega\in \wedge^2 T^* M$ and $\theta_j\in\wedge^1 T^*_\C M$, where $k=0$ is possible and by the action of $e^{B+i\omega}$ on any $\psi$ we mean 
\begin{equation}\label{eq:B-action}
e^{B+i\omega}\psi:=e^{B+i\omega}\wedge \psi.    
\end{equation}

Motivated by this, for a symplectic form $\omega$, we consider the differential form $$e^{i\omega}:=1+i\omega-\omega^2/2!+\ldots,$$ which (globally) generates the subbundle $K_\omega :=\la e^{i\omega} \ra$. We thus have that both $K_\omega,K_J \subset \wedge^\bullet T^*_\C M$ and that both $e^{i\omega}$ and any section of $K_J$ are pure spinors.

Moreover, spinors come with an $\Spin_0$-invariant pairing with values in $\wedge^{top} T^*_\C M$ given by 
\begin{equation} \label{eq:pairing}
    ( \rho, \psi ) := [\rho^\top \wedge \psi]_{top},
\end{equation} 
 where $\phantom{\cdot}^\top$ denotes the reversing antiautomorphism $(\alpha_1\wedge \ldots \wedge \alpha_r)^\top:=\alpha_r\wedge\ldots\wedge \alpha_1$ for $1$-forms $\alpha_j$, and $[\;\,]_{top}$ the top-degree component. With respect to this pairing, we have that for any section $\zeta$ of $K_J$, $(\zeta,\overline{\zeta})\neq 0$, and $(e^{i\omega},e^{-i\omega})$ is a nonzero multiple of $\omega^m\neq 0$.
 
Finally, since $d\omega=0$ is equivalent to $de^{i\omega}=0$, we have that both $e^{i\omega}$ and any section of $K_J$ are closed. However, we consider, to our benefit, a weaker integrability condition that involves the Clifford action: we say that $\rho$ is integrable if there exists some complex $X+\al$ such that
\begin{equation*}\label{eq:drho=X+alrho}
d\rho = (X+\al)\cdot \rho.    
\end{equation*}

Generalized complex structures are the `greatest common divisor of symplectic and complex structures seen through the Clifford algebra glasses'.

\begin{definition}\label{def:gen-cplx}
    A generalized complex structure on a manifold $M$ is a subbundle $K\subset \wedge^\bullet T^*_\C M$ admitting local sections $\rho$ such that:
\begin{itemize}
    \item $\rho$ is pointwise pure, that is, $\rho=e^{B+i\omega}\theta_1\wedge \ldots \wedge \theta_k$ as in \eqref{eq:pure}.
    \item $\rho$ has real index zero, that is, $(\rho,\overline{\rho})\neq 0$ for the pairing in \eqref{eq:pairing}.
    \item $\rho$ is integrable, that is, $d\rho = (X+\al)\cdot \rho$ for some complex section $X+\alpha$. 
\end{itemize}
We call $k$ the type, which is an upper-semicontinuous integer function bounded by $m$.
\end{definition}



Note that purity implies constant parity on $\rho$, which together with $(\rho,\overline{\rho})\neq 0$ forces the dimension of $M$ to be even. The existence of a generalized complex structure implies the existence of an almost complex structure \cite[Sec. 3]{gualtieri-annals:2011}. So generalized complex geometry interacts with the border between complex and almost complex structures: in fact, there are generalized complex compact $4$-manifolds, like $3\mathbb{C}P^2$, that are neither complex nor symplectic \cite{cavalcanti-gualtieri:2009}. 

\begin{remark}\label{rem:formalism-Courant}
    Our approach recovers the usual presentation of generalized complex geometry in the same way that $J$ was recovered by $\zeta$ above. The annihilator of $\rho$ does not depend on the choice of section of $K$ and gives a subbundle $L\subset T_\C M \oplus T^*_\C M$ such that 
    \begin{itemize}
    \item $L$ is maximally isotropic, because of the purity of $\rho$.
    \item $L\cap \overline{L}=\{0\}$, because of $(\rho,\overline{\rho})\neq 0$.
    \item $L$ is involutive for the Dorfman bracket on $\Gamma(T_\C M \oplus T^*_\C M)$,
    $$[X+\al,Y+\be]:=[X,Y]+L_X\be -\iota_Y d\al.$$
    \end{itemize}
    The latter condition is equivalent to $d\rho=(X+\al)\cdot \rho$ for some complex $X+\al$, which explains the weaker integrability condition we can afford. The subbundle $L$ then plays the role of the $+i$-eigenbundle of a skew-symmetric endomorphism $\cJ$ satisfying $\cJ^2=-\Id$ \cite[Def. 3.1]{gualtieri-annals:2011}.
\end{remark}

\subsection{$B_n$-generalized complex geometry through differential forms}

For $M$ an $n$-dimensional manifold, $B_n$-generalized complex geometry is an extension of generalized complex geometry based on the fact that $\wedge^\bullet T^*_\C M$ is a Clifford module also for $T_\C M\oplus 1_\C \oplus T^*_\C M$ (where  $1_\C:=M\times \C$, and whose elements or local/global sections we denote by $X+f+\al$) with the pairing
\begin{equation}\label{eq:Bn-pairing}
\la X+f+\al,X+f+\al \ra := i_X \al + f^2
\end{equation}
and the Clifford action 
\begin{equation*}\label{eq:spinorial-action}
 (X+f+\al)\cdot \rho := \iota_X \rho + f\tau\rho +\xi\wedge \rho, \end{equation*}
where $\tau\rho=\tau(\rho_+ + \rho_-):=\rho_+ - \rho_- $ for $\rho_{\pm}$ the even/odd components of $\rho$. Being pure now corresponds, pointwise, to being a nonzero multiple of \begin{equation}\label{eq:pure-Bn} 
    \rho= e^{A+i\sigma} e^{B+i\omega} \theta_1\wedge \ldots \wedge \theta_k,
\end{equation}
for $A,\sigma\in\wedge^1 T^*M$, $B,\omega\in\wedge^2 T^*M$ and $\theta_j\in \wedge^1 T^*M_\C$, with $e^{B+i\omega}$ acting as in \eqref{eq:B-action} and  $e^{A+i\sigma}$ acting, on an arbitrary $\psi$, by
 \begin{equation}\label{eq:A-field}
 e^{A+i\sigma} \psi := \psi + (A+i\sigma)\wedge \tau\psi. \end{equation}
 The pairing on spinors is \eqref{eq:pairing} for $n$ even and, for $n$ odd, it is given by
\begin{equation}
    \label{eq:pairing-Bn} 
( \rho, \psi ):=[\widetilde{\rho}\wedge \psi]_{top},
\end{equation}
where $\widetilde{\rho}:=(\tau \rho)^\perp=\rho_+^\perp-\rho_-^\perp$ is the Clifford conjugation.


\begin{remark}
    Since the pairing \eqref{eq:Bn-pairing} is of signature $(n+1,n)$, which is preserved by the Lie group $\OO(n+1,n)$, a real form of $\OO(2n+1,\C)$ (of Lie type $B_n$), the prefix $B_n$ is used. 
\end{remark}

By analogy with Definition \ref{def:gen-cplx}, we introduce the relevant structures for this work.

\begin{definition}\label{def:Bn-gcs}
    A $B_n$-generalized complex structure on $M$ is a subbundle $K\subset \wedge^\bullet T^*_\C M$ admitting local sections $\rho$ such that:
\begin{itemize}
    \item $\rho$ is pointwise pure, that is, $\rho=e^{A+i\sigma} e^{B+i\omega}\theta_1\wedge \ldots \wedge \theta_k$ as in \eqref{eq:pure-Bn}.
    \item $\rho$ has real index zero, that is, $(\rho,\overline{\rho})\neq 0$ for the pairing \eqref{eq:pairing-Bn}/\eqref{eq:pairing} for $n$ odd/even.
    \item $\rho$ is integrable, that is, $d\rho = (X+f+\al)\cdot \rho$ for some complex section $X+f+\alpha$. 
\end{itemize}
We call $k$ the type, which is an upper-semicontinuous integer function bounded by $\lfloor n/2 \rfloor$.
\end{definition}

Note that any generalized complex structure is, in particular, a $B_n$-generalized complex structure. However, $B_n$-generalized complex structures are much more flexible. To start with, they can be defined also in odd dimensions $n=2m+1$. To give examples, we first recall the analogues of symplectic and complex structures in odd dimensions.

\begin{definition}\label{def:cosymplectic}
 A cosymplectic structure is a pair $(\sigma,\omega)$ consisting of  global forms $\omega\in\Omega^2_{cl}(M)$ and $\sigma\in\Omega^1_{cl}(M)$ with $\sigma\wedge\omega^m \neq 0$.
 \end{definition}
  
\begin{definition}
An almost contact structure is a tuple $(Y,\eta,J)$ consisting of a vector field $Y$, a $1$-form $\eta$ and $J\in \End(TM)$ satisfying $\iota_Y\eta = 1$ and $J^2=-\Id + Y\ot \eta$ (as a consequence, $J(Y)=0$,  $\eta\circ J=0$ and $J$ defines a complex structure on the distribution $\Ker \eta$).
It is said to be normal (nacs) when the almost complex structure $\widetilde{J}$ defined on the manifold $M\times \R$ as the extension of $J_{|\Ker \eta}$ by $\widetilde{J}Y=\partial_r$, with $r$ the $\R$-coordinate, is integrable.
\end{definition}

\begin{example}\label{ex:spinors-cosym-nacs}
A cosymplectic structure determines the $B_n$-generalized complex structure globally given by $e^{i(\sigma+\omega)}$, whereas a nacs determines one, for any choice of an $(m,0)$-form $\Omega$ on the annihilator  $Y^\circ$, given by $\rho=\Omega+i  \Omega\wedge \eta.$
\end{example}

\begin{example}\label{ex:S2xS1}
In particular, the cosymplectic structure on $\SSS^2\times \SSS^1$ (or $\D^2\times \SSS^1$) given by $(d\theta, \omega)$, with $d\theta$ the angle form of $\SSS^1$ and $\omega$ the area form of $\SSS^2$ (or $\D^2$) determines the spinor
$$ 1 + i d\theta + i \omega - d\theta \wedge \omega. $$
\end{example}

The examples most relevant to us are those with type change.

\begin{example}\label{ex:type-change-CxR}
On $\C\times \R$ with complex coordinate $z$ and real coordinate $t$,  the form $\rho=z+dz+idz\wedge dt$ defines a $B_3$-generalized complex structure (note that $d\rho=i\partial_t\cdot \rho$). Its type is $0$ along $\{z\neq 0\}$ and $1$ elsewhere.
\end{example}

As we see, in practice, instead of giving $K$, we will consider a family of local differential forms $\rho$ that, on the intersections, differ by a non-vanishing complex function.

\subsection{Symmetries: $A$ and $B$-fields}\label{sec:symmetries} Let $\rho$ be a $B_n$-generalized complex structure (possibly defined locally). For a diffeomorphism $f\in\Diff(M)$, the pullback $f^* \rho$ is again $B_n$-generalized complex.

We also look at the actions of $e^{A}$ and $e^B$ as in \eqref{eq:B-action} and \eqref{eq:A-field}, but for real forms. Note that, for $A, A'\in \Omega^1$, $B, B'\in \Omega^2$, the action of $e^B$ commutes both with that of $e^{B'}$ and $e^A$, whereas, acting as endomorphisms, 
\begin{equation}\label{eq:eAeA'}
e^{A}e^{A'}=e^{A+A'}e^{-A\wedge A'}.    
\end{equation}
Hence, the actions of $e^{A}$ and $e^B$ preserve the purity condition. Also,  
\begin{equation}\label{eq:eAeB-pairing}
    (e^{A}e^B\rho,\overline{e^{A}e^B\rho})=(e^{A}e^B\rho,e^{A}e^B\overline{\rho})=(\rho,\overline{\rho}).
\end{equation} 
Finally, we have
\begin{equation}
\label{eq:deBrho}
\begin{split}
d(e^B\rho) & = dB\wedge e^B\rho + e^Bd\rho,\\ d(e^A\rho) & = dA\wedge \tau \rho + e^A d\rho \\ & = dA\wedge \tau (e^A \rho) + dA\wedge A \wedge (e^A \rho) + e^A d\rho, 
 \end{split}\end{equation} 
which we shall combine with the following result.
\begin{lemma}\label{lem:eBeA}
    For $v:=X+f+\al$, a spinor $\rho$, and forms $B\in \Omega^2$, $A\in\Omega^1$, we have
\begin{align*}
e^B(v\cdot \rho) &= (e^{-B}v)\cdot (e^B\rho), & 
e^A(v\cdot \rho) &= (e^{-A}v)\cdot (e^A\rho),
\end{align*}
where, by definition,
\begin{align*}
    e^{-B}v&=e^{-B}(X+f+\alpha):=X+f+\alpha-\iota_X B,\\
    e^{-A}v&=e^{-A}(X+f+\alpha):=X+f-\iota_X A + \alpha + 2fA.
\end{align*}
\end{lemma}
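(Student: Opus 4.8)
The plan is to prove both identities by \emph{direct computation}, expanding each side with the definition of the Clifford action together with the definitions of the $e^B$- and $e^A$-actions in \eqref{eq:B-action} and \eqref{eq:A-field}, and then collecting terms. Conceptually these relations simply express that $e^B$ and $e^A$ are symmetries living in the (s)pin group: the spinor representation intertwines Clifford multiplication with the covering orthogonal transformation, so that $g\cdot(v\cdot\rho)=\phi(v)\cdot(g\cdot\rho)$, and here $\phi$ is exactly the map we denote $e^{-B}$, respectively $e^{-A}$. Establishing this in the present differential-form language, however, comes down to the same elementary verification, for which the toolbox is: the interior-product Leibniz rule $\iota_X(\alpha\wedge\eta)=(\iota_X\alpha)\eta-\alpha\wedge\iota_X\eta$ for a $1$-form $\alpha$; the parity rules $\tau(\iota_X\rho)=-\iota_X\tau\rho$ and $\tau(\alpha\wedge\rho)=-\alpha\wedge\tau\rho$ for odd-degree $\alpha$, together with $\tau(B\wedge\rho)=B\wedge\tau\rho$ for even-degree $B$; and the vanishing $A\wedge A=0$.

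I would treat the $e^B$-case first, as it is the clean one. Since $e^B$ is a sum of even-degree forms, wedging by it commutes with $\tau$ and with wedging by the $1$-form $\alpha$, so the terms $f\,e^B\wedge\tau\rho$ and $e^B\wedge\alpha\wedge\rho$ immediately become $f\tau(e^B\rho)$ and $\alpha\wedge(e^B\rho)$. The only nontrivial piece is $e^B\wedge\iota_X\rho$, which I would rewrite using the Leibniz rule as $\iota_X(e^B\wedge\rho)-(\iota_X e^B)\wedge\rho$, together with the identity $\iota_X e^B=(\iota_X B)\wedge e^B$ (proved termwise from $\iota_X(B^k)=k\,(\iota_X B)\wedge B^{k-1}$). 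Collecting, $e^B(v\cdot\rho)$ becomes $\iota_X(e^B\rho)+f\tau(e^B\rho)+(\alpha-\iota_X B)\wedge(e^B\rho)$, which is precisely the Clifford action of $e^{-B}v=X+f+\alpha-\iota_X B$ on $e^B\rho$.

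The $e^A$-case is the delicate one, and I expect the bookkeeping there to be the main obstacle. Because $e^A\rho=\rho+A\wedge\tau\rho$ mixes degrees, both $\iota_X$ and $\tau$ produce cross terms: one computes $\tau(e^A\rho)=\tau\rho-A\wedge\rho$ and, via the Leibniz rule, $\iota_X(e^A\rho)=\iota_X\rho+(\iota_X A)\tau\rho-A\wedge\iota_X\tau\rho$. The key subtlety is that $\iota_X(A\wedge\tau\rho)$ throws off a \emph{scalar} contribution $(\iota_X A)\tau\rho$, which is exactly what must be absorbed into the function slot, accounting for the shift $f\mapsto f-\iota_X A$ in the definition of $e^{-A}v$; simultaneously the $1$-form slot acquires the $2fA$ term. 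I would expand $e^A(v\cdot\rho)=(v\cdot\rho)+A\wedge\tau(v\cdot\rho)$ on one side and $(e^{-A}v)\cdot(e^A\rho)$ on the other, and then reorganize, the crux being to check that all contributions quadratic in $A$ cancel by $A\wedge A=0$ and by the sign rule $-A\wedge\alpha=\alpha\wedge A$, so that the two sides agree. This last cancellation is what certifies that $e^{-A}$ is a genuine Clifford symmetry (equivalently, that the stated transformation preserves the pairing $\iota_X\alpha+f^2$), and it is the step where signs and degree-tracking must be handled with the greatest care.
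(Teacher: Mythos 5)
Your overall strategy---direct expansion via the Clifford action and the definitions \eqref{eq:B-action} and \eqref{eq:A-field}---is exactly the paper's proof, which is left as a direct check with a pointer to \cite[Ch.~1]{rubio:2014}. Your $e^B$ computation is complete and correct: the identity $\iota_X e^B=(\iota_X B)\wedge e^B$ together with the Leibniz rule closes that half, and your sign bookkeeping ($\tau\iota_X=-\iota_X\tau$, $\tau(\alpha\wedge\cdot)=-\alpha\wedge\tau(\cdot)$, $e^B$ even) is right.

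The gap is in the $e^A$ case, at precisely the step you single out as the crux: the contributions quadratic in $A$ do \emph{not} all cancel, and your claim that ``the two sides agree'' fails by one term. Expanding, the left-hand side is
\begin{equation*}
e^A(v\cdot\rho)=\iota_X\rho+f\tau\rho+\alpha\wedge\rho-A\wedge\iota_X\tau\rho+fA\wedge\rho-A\wedge\alpha\wedge\tau\rho,
\end{equation*}
while $(e^{-A}v)\cdot(e^A\rho)$, computed with the stated $e^{-A}v=X+(f-\iota_X A)+(\alpha+2fA)$, equals the same expression \emph{plus} the residual term $(\iota_X A)\,A\wedge\rho$. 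This residue comes from the cross term $(-\iota_X A)\cdot(-A\wedge\rho)$ in $(f-\iota_X A)\,\tau(e^A\rho)$, using $\tau(e^A\rho)=\tau\rho-A\wedge\rho$, and nothing else compensates it: $(\alpha+2fA)\wedge A\wedge\tau\rho$ only reproduces $-A\wedge\alpha\wedge\tau\rho$ after $2fA\wedge A\wedge\tau\rho=0$. The fix is that the transformation must be $e^{-A}v=X+(f-\iota_X A)+\alpha+2fA-(\iota_X A)A$, as in \cite[Ch.~1]{rubio:2014}: the extra slot contributes $-(\iota_X A)A\wedge e^A\rho=-(\iota_X A)A\wedge\rho$ (by $A\wedge A=0$) and exactly cancels the residue, so the statement as printed is missing this quadratic term and your computation, carried out honestly, would have exposed it rather than confirmed it. Your own proposed sanity check also refutes the stated map: it is \emph{not} orthogonal, since $\langle e^{-A}v,e^{-A}v\rangle=\iota_X\alpha+f^2+(\iota_X A)^2$, whereas the corrected map preserves the pairing. (None of the downstream uses---the criterion $dA=dB=0$ for integrability and Lemma \ref{lem:twist-int}---are affected, since they only need the right-hand side to be the Clifford action of \emph{some} section of $T_\C M\oplus 1_\C\oplus T^*_\C M$.)
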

\begin{proof}
    It can be checked directly (see also  \cite[Ch. 1]{rubio:2014}).
\end{proof}

\begin{remark}
Identity \eqref{eq:eAeB-pairing} and Lemma \ref{lem:eBeA} correspond to the fact that $\exp(A)$ and $\exp(B)$ are elements of $\Spin_0$ acting as $e^{\pm A}$ or $e^{\pm B}$ depending on whether they act on $v$ or $\rho$.
\end{remark}
\noindent Thus, by \eqref{eq:deBrho} and Lemma \ref{lem:eBeA}, we have that $e^Ae^B\rho$ is integrable for all $\rho$ integrable if and only if $dA=0$, $dB=0$. For global $A$ and $B$ these are new symmetries joining the theory.

\begin{definition}
    An $A$-field is an endomorphism $e^A$, for $A\in \Omega^1_{cl}(M)$, acting on $\Omega^\bullet_\C$ as in \eqref{eq:A-field}, and a $B$-field is an endomorphism $e^B$, for $B\in\Omega^2_{cl}(M)$, acting on $\Omega^\bullet_\C$ as in \eqref{eq:B-action}. 
\end{definition}

We have proved in this section that diffeomorphisms, $A$ and $B$-fields generate a group acting on $B_n$-generalized complex structures. This group is called the group of generalized diffeomorphisms.  For instance, any type $0$ structure is equivalent by a generalized diffeomorphism to a cosymplectic structure as in Example \ref{ex:spinors-cosym-nacs}.


\subsection{The integrability condition revisited}
\label{sec:integrability-revisited}

 Whereas in Examples \ref{ex:spinors-cosym-nacs} and \ref{ex:S2xS1} the forms are closed, in Example \ref{ex:type-change-CxR} it is not the case. However, wherever $\rho_0\neq 0$ we can always find a closed representative.  

\begin{lemma}\label{lem:rho/rho0-closed}
Consider a $B_n$-generalized complex structure locally given by some $\rho$. Wherever $\rho_0\neq 0$, the equivalent description $1+\rho_1/\rho_0 + \ldots + \rho_n/\rho_0$ is a closed form.
\end{lemma}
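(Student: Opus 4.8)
The plan is to normalize the spinor so that its degree-zero part equals $1$, and then read off closedness from the lowest-degree components of the integrability equation. Set $\hat\rho := \rho/\rho_0 = 1 + \rho_1/\rho_0 + \ldots + \rho_n/\rho_0$, which is defined wherever $\rho_0\neq 0$. First I would check that $\hat\rho$ is again integrable: writing $\rho = \rho_0\,\hat\rho$ and using that the Clifford action is linear over functions in the spinor slot, the identity $d\rho = v\cdot\rho$ from Definition~\ref{def:Bn-gcs} becomes $d\hat\rho = \big(v - d\rho_0/\rho_0\big)\cdot\hat\rho$. Hence $d\hat\rho = \hat v\cdot\hat\rho$ for some complex $\hat v = X+f+\alpha$, the integrability element having changed only by the $1$-form $-\,d\rho_0/\rho_0$. (Equivalently, integrability is a property of the subbundle $K$, hence of the annihilator $L=\Ann(\hat\rho)=\Ann(\rho)$, as in Remark~\ref{rem:formalism-Courant}.)

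Next I would determine the type. As $\hat\rho_0 = 1\neq 0$ and $\hat\rho$ is pure of the form \eqref{eq:pure-Bn}, the lowest-degree term of $e^{A+i\sigma}e^{B+i\omega}\theta_1\wedge\ldots\wedge\theta_k$ has degree $k$, so $k=0$ and $\hat\rho = e^{A+i\sigma}e^{B+i\omega}$. Applying Lemma~\ref{lem:eBeA} with the (complexified) $A$- and $B$-fields to the obvious annihilator $\Ann(1)=T_\C M$ of the constant spinor $1$, one finds $L = \{\,X - \iota_X(A+i\sigma) - \iota_X(B+i\omega) : X\in T_\C M\,\}$, a graph over $T_\C M$; in particular the projection $L\to T_\C M$ is an isomorphism.

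The crux, and what I expect to be the main obstacle, is that $\hat v$ is \emph{not} unique: it is determined only modulo $L$, since $\ell\cdot\hat\rho=0$ for every $\ell\in L$. I would turn this ambiguity to my advantage. Because $L$ projects isomorphically onto $T_\C M$, there is a (unique) $\ell\in L$ with the same vector part as $\hat v$; replacing $\hat v$ by $\hat v-\ell$, which leaves $d\hat\rho = \hat v\cdot\hat\rho$ unchanged, I may assume $\hat v = f+\alpha$ has no vector part. Without this reduction the degree-$2$ and degree-$3$ parts of the integrability equation masquerade as genuine constraints rather than as automatic identities.

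Finally I would compare the lowest-degree components. Since $d$ raises degree, $(d\hat\rho)_0 = 0$ and $(d\hat\rho)_1 = d\hat\rho_0 = 0$. With $\hat v = f+\alpha$, the Clifford action gives $(\hat v\cdot\hat\rho)_0 = f(\tau\hat\rho)_0 = f\hat\rho_0 = f$, forcing $f=0$; then, with $f=0$, $(\hat v\cdot\hat\rho)_1 = (\alpha\wedge\hat\rho)_1 = \alpha\wedge\hat\rho_0 = \alpha$, forcing $\alpha=0$. Hence $\hat v=0$ and $d\hat\rho=0$, so $\hat\rho$ is closed. Note that only purity and integrability enter; the real-index-zero hypothesis plays no role in this argument.
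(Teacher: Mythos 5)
Your proposal is correct and takes essentially the same route as the paper's proof: normalize to $\rho/\rho_0$, use purity (with degree-zero part $1$) to reduce to the exponential form $e^{A+i\sigma}e^{B+i\omega}$, and compare the degree-$0$ and degree-$1$ components of the integrability equation to force the Clifford multiplier, and hence $d(\rho/\rho_0)$, to vanish. Your explicit gauge-fixing of $X+f+\alpha$ modulo the annihilator $L$ (using that $L$ is a graph over $T_\C M$) is a clean structural rendering of what the paper does implicitly when it absorbs the contraction $\iota_X$ into scalar and wedge multipliers acting on the exponential spinor.
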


\begin{proof}
	By purity, the equivalent description can be written as $\rho/\rho_0=e^{A+i\sigma}e^{B+i\omega}$. The integrability condition for some $X+f+\alpha$ is 
	\[
	(dA+id\sigma + dB+i\omega) \wedge \rho /\rho_0=(\iota_X A + i \iota_X \sigma + \iota_X B  +  i \iota_X \omega + \alpha)\wedge  \rho/\rho_0 + f\tau(\rho/\rho_0).
	\]
	By looking at degree $0$ and using the condition in degree $1$, we get
    \begin{align*}
       0&= \iota_XA+i\iota_X \sigma + f,& 0&=\iota_X B+i\iota_X \omega + \alpha - 2f(A+i\sigma).
    \end{align*}
    Using these expressions in degrees $2$ and degrees $3$, we get, respectively,
    \begin{align*}
        dA+id\sigma&=0,&   dB+id\omega&=0,
    \end{align*}
    so the left-hand side is clearly zero, that is, $d(\rho/\rho_0)=0$.
\end{proof}
For example, wherever $z\neq 0$, the spinor $1+\frac{dz}{z}+i\frac{dz}{z}\wedge dt$ coming from Example \ref{ex:type-change-CxR} is indeed closed.

\begin{lemma}
    A necessary condition for the integrability of a $B_n$-generalized complex structure $\rho=\rho_0+\ldots+\rho_n$ is that, for all degrees $j$,
    \begin{equation}\label{eq:equality-by-components}
        \rho_0 d\rho_j=d\rho_0\wedge \rho_j.
    \end{equation}
\end{lemma}

\begin{proof}
    Let $p\in M$. When $\rho(p)\neq 0$, condition \eqref{eq:equality-by-components} follows from Lemma \ref{lem:rho/rho0-closed}. If $\rho_0(p)=0$ and $d\rho_0(p)=0$, it is trivially satisfied, and when $\rho_0(p)=0$ and $d\rho_0(p)\neq 0$, then $p$ is an isolated zero of $\rho_0$ and it follows from the case $\rho(p)\neq 0$ by continuity.
\end{proof}

\subsection{Twisted structures}

Although our focus is on $B_n$-generalized complex structures, we will need to consider the larger class of twisted structures, for which we replace the integrability in Definition \ref{def:Bn-gcs} by:
\begin{itemize}
    \item twisted integrable: there exist a $3$-form $H\in\Omega^3(M)$ and a closed $2$-form $F\in\Omega^2_{cl}(M)$ satisfying $dH+F^2=0$ such that, locally for a spinor $\rho$, there exists some complex $X+f+\alpha$ satisfying   
$$(d+ F\wedge \tau + H\wedge)\rho = (X+f+\al)\cdot \rho.$$
\end{itemize}
Note that $H$ and $F$ are global whereas $\rho$ and $X+f+\alpha$ can be local.

We see next that acting on $\rho$ by $e^A$ or $e^B$, for $A$, $B$ not necessarily closed, twists the integrability condition and justifies the definition. 

\begin{lemma}\label{lem:twist-int}
    For a twisted $B_n$-generalized complex structure (with respect to $F$ and $H$) locally given by $\rho$, and forms $A\in\Omega^1$ and $B\in\Omega^2$, we have that $e^A e^B\rho$ is twisted integrable with respect to
$$ d + (F-dA)\wedge \tau + (H-dB-(dA-2F)\wedge A)\wedge.$$
\end{lemma}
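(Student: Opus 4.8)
The plan is to apply the two transformations one at a time, since $e^A e^B\rho = e^A(e^B\rho)$, mirroring the two lines of \eqref{eq:deBrho}. Throughout I would rewrite the twisted integrability of $\rho$ as $d\rho = (X+f+\al)\cdot\rho - F\wedge\tau\rho - H\wedge\rho$ and repeatedly use Lemma \ref{lem:eBeA} to move $e^A$ and $e^B$ across the Clifford action, together with the parity rules $\tau(\eta\wedge\psi)=\eta\wedge\tau\psi$ for even $\eta$ and $\tau(\eta\wedge\psi)=-\eta\wedge\tau\psi$ for odd $\eta$.

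First the $B$-field. Applying $e^B$ to the rewritten integrability and using the first line of \eqref{eq:deBrho} gives $d(e^B\rho)=dB\wedge e^B\rho + e^B d\rho$. Since $e^B$ is wedging by an even form, it commutes with $\tau$ and with wedging by $F$ and $H$, so $e^B(F\wedge\tau\rho)=F\wedge\tau(e^B\rho)$ and $e^B(H\wedge\rho)=H\wedge(e^B\rho)$; combined with $e^B((X+f+\al)\cdot\rho)=(e^{-B}(X+f+\al))\cdot(e^B\rho)$ from Lemma \ref{lem:eBeA}, this yields
$$(d + F\wedge\tau + (H-dB)\wedge)(e^B\rho)=(e^{-B}(X+f+\al))\cdot(e^B\rho),$$
so that $\psi:=e^B\rho$ is twisted integrable with $F_1=F$ and $H_1=H-dB$.

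Next the $A$-field. Writing $\Psi:=e^A\psi$ and using the (expanded) second line of \eqref{eq:deBrho} together with Lemma \ref{lem:eBeA}, the only real work is to re-express $e^A(F_1\wedge\tau\psi)$ and $e^A(H_1\wedge\psi)$ in terms of $\Psi$. Using $\tau\psi=\tau\Psi+A\wedge\psi$, $\psi=\Psi-A\wedge\tau\psi$ and $A\wedge A=0$, together with the commutation relations $A\wedge F_1=F_1\wedge A$ and $A\wedge H_1=-H_1\wedge A$, the identities I expect are
$$e^A(F_1\wedge\tau\psi)=F_1\wedge\tau\Psi+2F_1\wedge A\wedge\Psi, \qquad e^A(H_1\wedge\psi)=H_1\wedge\Psi,$$
the cancellation in the second being exactly where the oddness of $H_1$ enters. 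Substituting and collecting the $\tau$- and $\wedge$-terms produces $(d + (F_1-dA)\wedge\tau + (H_1 - dA\wedge A + 2F_1\wedge A)\wedge)\Psi = (e^{-A}e^{-B}(X+f+\al))\cdot\Psi$; substituting $F_1=F$, $H_1=H-dB$ and rewriting $-dA\wedge A+2F\wedge A=-(dA-2F)\wedge A$ gives precisely the stated twisting data.

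It then remains to verify that $F':=F-dA$ and $H':=H-dB-(dA-2F)\wedge A$ are admissible, i.e.\ that $F'$ is closed and $dH'+(F')^2=0$. Closedness of $F'$ is immediate from $dF=0$. For the second, since $dA-2F$ is closed one has $d\bigl((dA-2F)\wedge A\bigr)=(dA-2F)\wedge dA$, whence a short expansion gives $dH'+(F')^2=dH+F^2$, which vanishes by hypothesis. The main obstacle is not conceptual but the sign and parity bookkeeping in the $A$-step: one must make sure the spurious $A\wedge H_1$ contributions cancel and that the coefficient of $F_1\wedge A$ comes out to exactly $2$, since this factor is what feeds the term $(dA-2F)\wedge A$ and, ultimately, the cancellation $dH'+(F')^2=dH+F^2$.
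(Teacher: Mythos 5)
Your proof is correct and follows essentially the same route as the paper's: equation \eqref{eq:deBrho}, Lemma \ref{lem:eBeA}, the commutation of $e^B$ with $F\wedge\tau$ and $H\wedge$ and of $e^A$ with $H\wedge$, and your identity $e^A(F_1\wedge\tau\psi)=F_1\wedge\tau\Psi+2F_1\wedge A\wedge\Psi$ is exactly the content of the paper's displayed relation $F\wedge \tau(A\wedge \tau\rho)=-F\wedge A\wedge \rho= - A\wedge \tau(F\wedge \tau\rho)$, which produces the factor $2$. In fact you go slightly further than the paper by explicitly checking the admissibility conditions $d(F-dA)=0$ and $dH'+(F')^2=dH+F^2=0$, which the paper's proof leaves implicit.
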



\begin{proof}
It follows from \eqref{eq:deBrho}, Lemma \ref{lem:eBeA} and the properties that $e^B$ commutes with $F\wedge \tau$ and $H$,  $e^{A}$ commutes with $H$, and 
\[
F\wedge \tau(A\wedge \tau\rho)=-F\wedge A\wedge \rho=  - A\wedge \tau(F\wedge \tau\rho). \] 

\vspace{-.3cm} \end{proof}

\subsection{Stable structures}

For a $B_n$-generalized complex structure given by $K$, any non-vanishing local section $\rho\in \Gamma(K)$ gives via $\rho_0$ a complex function on an open set. The preimage $\rho_0^{-1}(0)$ is independent of the choice of $\rho$.

\begin{lemma}\label{lem:rho0-cplx-coord}
    If $\rho_0$ vanishes transversely at $p$ (that is, $\rho_0(p)=0$ and $d\rho_0(p)\neq 0$), then, around $p$, the set $\rho_0^{-1}(0)$ is a codimension-$2$ submanifold and $\rho_0$ defines a complex coordinate transverse to it.
\end{lemma}

\begin{proof}
   From \eqref{eq:equality-by-components}, we have that $d\rho_0(p)\wedge \rho_j(p)=0$, so $\rho_j(p)=d\rho_0(p)\wedge \alpha_j$ for some $j$-form $\alpha_j$ at $p$. Consequently $(\rho,\overline{\rho})(p)=d\rho_0(p)\wedge \overline{d\rho_0}(p)\wedge \beta$ for some $(n-2)$-form $\beta$ at $p$. From the condition $(\rho,\overline{\rho})\neq 0$, it follows that $d\rho_0(p)\wedge \overline{d\rho_0}(p)\neq 0$, from which we deduce $\Rea d\rho_0(p) \wedge \Ima d\rho_0 (p)\neq 0$. Hence, $\rho_0^{-1}(0)$ is a codimension-$2$ submanifold (by the regular value theorem)  and, around $p$, the function $\rho_0$ defines a transverse complex coordinate. 
\end{proof}

The transverse vanishing of any local function $\rho_0$ is equivalently expressed as the transverse vanishing of the (global) anticanonical section $s\in \Gamma(K^*)$ induced by the projection from $K$ to its degree-$0$ component. The first part of Lemma \ref{lem:rho0-cplx-coord} motivates the following definition.

\begin{definition}
    We say that a $B_n$-generalized complex structure given by $K$ is stable when the anticanonical section $s\in \Gamma(K^*)$ vanishes transversely. For simplicity, we refer to these structures as $B_n$-structures.
\end{definition}

For a $B_n$-structure, the set $s^{-1}(0)$ is a codimension-$2$ manifold. For $B_3$-structures, where the type only takes values $0$ and $1$, this is exactly the type-change locus. 

From now on, we will focus on the class of stable structures. It is generic, has been the main focus of standard generalized geometry (see, for instance, the recent works \cite{cavalcanti-gualtieri:2018, cavalcanti-klaase}) and is connected to $\log$ or $b$-symplectic geometry \cite{gmp-2014}. Note that they are generically of type $0$.



\subsection{The main question}

Generalized structures usually encompass classical structures and a natural question is whether there are manifolds admitting the generalized structure but not the classical ones, as we mentioned for generalized complex structures in Section~\ref{sec:gengeo-through-forms}.

Note that a manifold admitting a $B_n$-generalized complex structures must be orientable, as the nowhere vanishing top form $(\rho,\overline{\rho})$ is defined up to a positive multiple. Type change is already possible for dimension $3$, as in Example \ref{ex:type-change-CxR}, so it is natural to ask \textbf{whether there are closed orientable $3$-manifolds that are are neither cosymplectic nor nacs but admit a $B_3$-structure}.

\begin{remark}\label{rem:cosym-nacs}
    Note that not every orientable $3$-manifold is cosymplectic or nacs. Cosymplectic manifolds are symplectic mapping tori (as explained, for instance, in \cite{li-cosymplectic}), whereas nacs closed $3$-manifolds were characterized by Geiges \cite{geiges-normal} and, by comparing with \cite{scott}, are equivalently described as orientable Seifert manifolds with orientable base.
\end{remark} 

In order to provide an answer to the question above, we first develop a family of surgeries.

\section{Surgeries on a cosymplectic neighbourhood}\label{sec:surgery}

Surgeries in four-dimensional generalized complex geometry combine a Dehn filling with a $\cCi$-log transform in a structure-compatible way \cite{cavalcanti-gualtieri:2007,cavalcanti-gualtieri:2009, torres:2012, goto-hayano}. In three dimensions, however, we have one less `degree of freedom' (one less variable) and one more degree component to take care of (we have both even and odd degrees). Still, we are able to find an analogous class of surgeries that will serve our purposes. We first fix some notation. All the disks will be $2$-dimensional disks. For $a > b > 0$, we denote by $D_a$ the open disk of radius $a$, and use the notation $R(a,b):=D_a\setminus \overline{D_b}$ for open rings. From now on, for the basics on $3$-manifold topology we refer to \cite{martelli2022introduction}.

\subsection{The $(p,\pm 1)$-surgery}\label{sec:p-1-surgery}


Start with a closed manifold $M$ endowed with a $B_3$-structure. Assume there is a neighbourhood where the structure is cosymplectic, given by $\sigma$ and $\omega$ (as in Example \ref{ex:spinors-cosym-nacs}), containing a  knot $\imath\colon \SSS^1\to M$ such that $\imath(\SSS^1)$ is transverse to $\ker \sigma$. Restrict this neighbourhood to a tubular neighbourhood $N\to \SSS^1$ in which the area of each fibre $\jmath\colon L\to N$ is constant, $\int_L \jmath^*\omega=\pi a^2$ for some $a>0$. Let $c\in \R^\times$ be such that $\int_{\SSS^1} \imath^* \sigma=2\pi c$. By Moser's trick we have a diffeomorphism of 
cosymplectic manifolds 
\begin{equation}\label{eq:diffeo-N-D-S1}
    (N,(\sigma,\omega))\cong (D_a\times \SSS^1,(cdt, sds\wedge d\phi)), 
\end{equation}
where $sds\wedge d\phi$ denotes the area form of $D_a$, written in polar coordinates $(s,\phi)$, and $t$ is the angular coordinate for $\SSS^1$. We use this diffeomorphism as an identification from now on. The $B_3$-structure is given, as in Example \ref{ex:S2xS1}, by
 \begin{equation}\label{eq:rho-cosymplectic-nhbd1}
 \rho:=1 + i c dt + i sds\wedge d\phi - cdt \wedge sds\wedge d\phi.	
 \end{equation}
Restrict $\rho$ to a ring $R_0:=R(a,b)$ times $\SSS^1$ for some $b$ with $a>b>0$, and pull it back to the ring $R_1:=R(e^{a^2/2},e^{b^2/2})$ times $\SSS^1$ via the map
\begin{alignat}{4} 
\Psi: & {R_1} \times \SSS^1  & \longrightarrow & R_0 \times \SSS^1 \nonumber \\
 & (r,\theta,\sigma) & \longmapsto & (\sqrt{ 2\log r}, p\theta -q\sigma, q\theta)=(s,\phi,t), \label{eq:map-psi}
\end{alignat}
 for some arbitrary integer $p$ and $q=\pm 1$ (so that it induces a Dehn filling on $\partial R_i\times \SSS^1$). The resulting structure is
$$ 1 + i c d(q\theta) + i d\log r \wedge d( p\theta - q\sigma) - \ldots $$
Act by the $A$-field $A:=c q d\log r$, as in \eqref{eq:A-field}, to get
$$ 1 + c q(d\log r + i d\theta) + i d\log r \wedge d( (p-c^2)\theta - q\sigma)-\ldots$$
Act by the $B$-field $B:=q d\theta\wedge d\sigma$, as in \eqref{eq:B-action}, to get
$$ 1 + c q(d\log r + i d\theta) + i (d\log r+i d\theta) \wedge d( (p-c^2)\theta - q\sigma).$$
We introduce the variable $z=re^{i\theta}$ so that the structure is written as
$$ 1 + cq \frac{dz}{z} + i \frac{dz}{z}\wedge d( (p-c^2)\theta - q\sigma).$$
Motivated by this, consider the structure on $T:=\D_{e^{a^2/2}}\times \SSS^1$ given by 
$$ \rho_T := z + cq dz - i q dz \wedge d\sigma + i \beta(r^2) (p-c^2) dz\wedge  d\theta,$$
where $\beta:[0,e^{a^2}]\to \R$ is a smooth function such that $\beta([0,x_0])=0$ and $\beta([x_1,b])=1$ for some $x_0<x_1<e^{b^2}$. The form $\rho_T$ defines a $B_3$-structure on the open solid torus $T$:
\begin{itemize}
    \item $\rho_T(0)=cq dz - i q dz \wedge d\sigma$ is pure, whereas for $z\neq 0$ we have that\\ $\rho_T=z \exp(cq dz/z - i q (dz/z) \wedge d\sigma + i \beta(r^2) (p-c^2) (dz/z)\wedge  d\theta)$ is also pure.
    \item $(\rho_T,\overline{\rho_T}) = -2icq^2 dz\wedge d\overline{z}\wedge d\sigma\neq 0$.
    \item $d\rho_T= -iq\partial_\sigma \cdot \rho_T $ (note that $\beta(r^2) (p-c^2) dz\wedge  d\theta$ is closed).
\end{itemize}

We consider the closed manifold $\widetilde{M}:=(M\setminus \overline{\D_b\times \SSS^1})\cup_\Psi T$, where we use $\Psi$ to glue along $R_0\times \SSS^1$ and $R_1\times \SSS^1$. The relation between the $B_3$-structures on $R_1\times \SSS^1$ is given, for $A$ and $B$ as above, by
\begin{equation}\label{eq:spinors-M-T}
    z e^A e^B \Psi^* \rho=\rho_T.
\end{equation}

We see next that this defines a twisted $B_3$-structure.

\begin{remark}
\label{remark:framing}
The underlying topological surgery is described by the coefficients $(p,\pm 1)$ and the isomorphism \eqref{eq:diffeo-N-D-S1}. This isomorphism induces a peripheral system:  two curves $m$ and $l$ that
are a basis for 
$H_1(\partial N,\mathbb Z)$, so that
$m$ is a meridian (that is, it bounds a disk in $N$).  Thus, through \eqref{eq:diffeo-N-D-S1},  $m$ corresponds to $\partial D^2_a$ and $l$ to the factor
$S^1$. For this peripheral system, $(p,\pm 1)$ surgery means that 
$p\,m\pm l$ is the filling meridian, which bounds a disk in the resulting manifold. 
Notice that different choices of the diffeomorphism  \eqref{eq:diffeo-N-D-S1} may have different peripheral systems, but the meridian does not change, 
so the $(p,\pm 1)$ surgery may be denoted by $(p',\pm 1)$ in a new peripheral system. Namely, $p$ may change by an integer but $\pm 1$ remains the same (in other words, the set of isotopy classes of orientation-preserving diffeomorphisms of the solid torus is $\mathbb Z$, generated by Dehn twists).
\end{remark}

\subsection{The twisting}\label{sec:the-twisting}

On $\widetilde{M}$ as above, consider the open sets corresponding to $M':=M\setminus \overline{\D_b\times \SSS^1}$ and $T$, for which we keep the same notation, and consider spinors $\rho_M$ and $\rho_T$, which on the intersection are related by $z e^A e^B\rho_M=\rho_T$ similarly as in \eqref{eq:spinors-M-T}. We consider some $A_M\in \Omega^1(M')$ and $A_T\in\Omega^1(T)$ such that $A=A_M-A_T$ on $M'\cap T$. From  \eqref{eq:eAeA'},
$$ e^A e^B = e^{-A_T}e^{A_M}e^{B-A_T\wedge A_M}=e^{-A_T}e^{A_M}e^{-C_T}e^{C_M}$$
for some $C_M\in \Omega^2(M)$ and $C_T\in\Omega^2(T)$ such that,  on $M'\cap T$, 
\begin{equation}\label{eq:BAC}
B-A_T\wedge A_M=C_M-C_T.    
\end{equation}
Note that $-dA_T=-dA_M$ defines a global closed $2$-form $F$, whereas, by differentiating \eqref{eq:BAC}, $-dC_M-dA_M\wedge A_M=-dC_T-dA_T\wedge A_T$ defines a global $3$-form $H$ satisfying $dH+F^2=0$.

Consider $e^{A_M}e^{C_M}\rho_M$ and $e^{A_T}e^{C_T}\rho_T$. By Lemma \ref{lem:twist-int} they are  twisted integrable for $H$ and $F$ as above. By \eqref{eq:spinors-M-T}, they differ on $M'\cap T$ by the non-vanishing function $z$, so they span a subbundle $K\subset \wedge^\bullet T^*_\C \widetilde{M}$ that gives a twisted $B_3$-structure (purity and real index zero follow from the corresponding properties for $\rho_M$ and $\rho_T$).

\begin{lemma}\label{lem:exact-F}
The surgery procedure just described introduces a twisting by an exact $F$.
\end{lemma}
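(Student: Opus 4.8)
The plan is to prove that the de Rham class $[F]\in\H^2(\widetilde M)$ vanishes, which is equivalent to $F$ being exact. The decisive observation is that the $1$-form $A=cq\,d\log r$ used to glue the surgery is not merely closed but \emph{exact} on the overlap $U:=M'\cap T=R_1\times\SSS^1$. Indeed, on $U$ the radial coordinate satisfies $r\in(e^{b^2/2},e^{a^2/2})$, so it is bounded away from $0$ and the function $cq\log r$ is smooth and single-valued there; hence $A=d(cq\log r)$ and $[A]=0$ in $\H^1(U)$.

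First I would set up the Mayer--Vietoris sequence for $\widetilde M=M'\cup T$ with overlap $U$, in its de Rham form, coming from the short exact sequence of complexes $0\to\Omega^\bullet(\widetilde M)\to\Omega^\bullet(M')\oplus\Omega^\bullet(T)\to\Omega^\bullet(U)\to 0$, where the right-hand map is $(\alpha,\beta)\mapsto\alpha|_U-\beta|_U$. The relation $A=A_M-A_T$ on $U$ exhibits $(A_M,A_T)$ as a lift of $A$ under this surjection, and, by the definition of the connecting homomorphism $\delta\colon\H^1(U)\to\H^2(\widetilde M)$, the global form that restricts to $dA_M$ on $M'$ and to $dA_T$ on $T$ (well defined because $d(A_M-A_T)=dA=0$ on $U$) represents $\delta[A]$. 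By the very definition of $F$, namely $F=-dA_M=-dA_T$, this global form is $-F$, so $\delta[A]=-[F]$. In particular this also shows that $[F]$ does not depend on the choice of splitting $A=A_M-A_T$, so the statement is well-posed.

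The conclusion is then immediate: since $[A]=0$ in $\H^1(U)$, we get $[F]=-\delta[A]=0$, that is, $F$ is exact.

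The only genuine point to be careful about is the exactness (not merely the closedness) of $A$ on $U$, which is exactly what distinguishes this situation from a general gluing and forces $[F]$ to vanish. I would stress that the same function $\log r$ does \emph{not} extend across the core $\{r=0\}$ of the solid torus $T$, which is precisely why $A$ cannot naively be written as $A_M-A_T$ with both summands closed, and why a twisting appears at all; it is only the cohomological triviality of $[A]$ on the overlap that guarantees the twisting is by an exact $F$. If an explicit primitive is desired, one can take $A_T:=cq\,\beta(r^2)\,d\log r$, which is smooth on all of $T$ because $\beta$ vanishes near the core, and closed since $d\log r\wedge d\log r=0$; then $F$ vanishes on $T$ and on $U$, and the Mayer--Vietoris computation above identifies its class as $0$.
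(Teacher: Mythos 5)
Your proof is correct, but it follows a genuinely different route from the paper's. The paper argues by integration: writing $F=-dA_M$ on $\widetilde{M}\setminus T$ and $F=-dA_T$ on $T$, it applies Stokes' theorem to get $\int_{\widetilde{M}}F\wedge\xi=\int_{\partial T}A\wedge\xi=0$ for every closed $1$-form $\xi$ (because $A=cq\,d\log r$ pulls back to zero on the torus $\partial T$), and then invokes Poincar\'e duality $H^1(\widetilde{M})^*\cong H^2(\widetilde{M})$ on the closed orientable $\widetilde{M}$ to conclude exactness. You instead work at the level of the Mayer--Vietoris sequence for the cover $\{M',T\}$ and identify $[F]$ with $-\delta[A]$, the decisive input being that $A$ is \emph{exact} (not merely closed) on the overlap $U$, where $\log r$ is smooth and single-valued; this is strictly stronger than the boundary-vanishing the paper uses, since $A=d(cq\log r)$ on $U$ gives $\int_{\partial T}A\wedge\xi=\int_{\partial T}d(cq\log r\cdot\xi)=0$ by Stokes on the closed surface $\partial T$. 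Your argument buys several things: it makes explicit that $[F]$ is independent of the splitting $A=A_M-A_T$ (a well-posedness point the paper leaves implicit), it needs neither compactness nor orientability nor duality, and it yields the sharper conclusion that a suitable splitting kills $F$ at the level of forms. The paper's computation, in turn, is deliberately uniform with the parallel Stokes computation in Lemma \ref{lem:non-exact-H} (showing $\int_{\widetilde{M}}H'=4\pi^2q\neq 0$) and carries over verbatim to the simultaneous surgeries of Proposition \ref{prop:many-surgeries}, though your argument extends just as easily, with $U$ a disjoint union of rings and each $[A_j]=0$. One small sign slip in your final remark: with the convention $A=A_M-A_T$ and since $\beta(r^2)=1$ on $U$, your choice $A_T:=cq\,\beta(r^2)\,d\log r$ forces $A_M|_U=2A$, so $F$ need not vanish on $M'$; taking instead $A_T:=-cq\,\beta(r^2)\,d\log r$ together with $A_M:=0$ gives $F\equiv 0$ on all of $\widetilde{M}$, which is presumably what you intended. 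This does not affect your main argument, which is complete as stated.
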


\begin{proof}

We apply Stokes' Theorem: let $\xi$ be an arbitrary nonzero closed $1$-form,
\begin{equation}\label{eq:F-wedge-xi}
\begin{split} 
    \int_{\widetilde{M}} F\wedge \xi & {} = \int_{ \widetilde{M}\setminus T  } F\wedge \xi + \int_{ T  } F\wedge \xi  \\ 
     & {}  = -\int_{-\partial T}  A_M \wedge \xi - \int_{\partial T}  A_T \wedge \xi = \int_{\partial T} A\wedge \xi=0,
\end{split}
\end{equation}
where the last equality follows from the fact that $A=cq d\log r$ and $\partial T$ involves only the coordinates $\theta$ and $\sigma$. As $\widetilde{M}$ is closed, by Poincaré duality $H^1(\widetilde{M})^* \cong H^2(\widetilde{M})$, and we have that $F$ is exact.
\end{proof}

Say $F=d\Lambda$ in Lemma \ref{lem:exact-F}, for some $\Lambda\in\Omega^1(M)$, and consider the structure $e^{\Lambda}K$. By Lemma \ref{lem:twist-int}, $e^{\Lambda}K$ is twisted integrable for $F':=0$ and $H':=H + F\wedge \Lambda$.

\begin{lemma}\label{lem:non-exact-H}
    The form $H'$ as above is never exact, since it satisfies
    $$ \int_{\widetilde{M}} H' =   4\pi^2 q \neq 0  $$
\end{lemma}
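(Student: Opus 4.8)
The plan is to compute $\int_{\widetilde{M}} H'$ directly and read off the conclusion: once the integral is shown to equal $4\pi^2 q\neq 0$, non-exactness is immediate, since Stokes' theorem forces the integral of any exact $3$-form over the closed manifold $\widetilde{M}$ to vanish. (Note that closedness of $H'$, which follows from the twisting condition $dH'+(F')^2=0$ with $F'=0$, is not even needed for this implication.)

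The observation that makes the computation tractable is that $H'$ is \emph{exact on each of the two pieces} $M'$ and $T$ separately, with an explicit primitive. On $M'$ we have $F=-dA_M$ while globally $F=d\Lambda$, so the $1$-form $A_M+\Lambda$ is closed; likewise $A_T+\Lambda$ is closed on $T$. Substituting the defining expression $H|_{M'}=-dC_M-dA_M\wedge A_M$ into $H'=H+F\wedge\Lambda$ and using $F|_{M'}=-dA_M$ gives $H'|_{M'}=-dC_M-dA_M\wedge(A_M+\Lambda)$; since $A_M+\Lambda$ is closed and $A_M\wedge A_M=0$, this equals $-d\eta_M$ with $\eta_M:=C_M+A_M\wedge\Lambda$. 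The same computation on $T$ yields $H'|_T=-d\eta_T$ with $\eta_T:=C_T+A_T\wedge\Lambda$.

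Next I would pick a $2$-torus $\Sigma=\{r=r_0\}$ inside the gluing region $R_1\times\SSS^1$ (with coordinates $\theta$ and $\sigma$), which separates $\widetilde{M}$ into a solid torus $T_{in}\subset T$ containing the core and its complement $M_{out}\subset M'$, so that $\partial T_{in}=\Sigma=-\partial M_{out}$. Applying Stokes' theorem on each piece and adding,
\[
\int_{\widetilde{M}} H' = -\int_{\Sigma}\eta_T + \int_{\Sigma}\eta_M = \int_{\Sigma}(\eta_M-\eta_T).
\]
On the overlap one has $\eta_M-\eta_T=(C_M-C_T)+(A_M-A_T)\wedge\Lambda=B-A_T\wedge A_M+A\wedge\Lambda$, using \eqref{eq:BAC} and $A=A_M-A_T$.

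Finally I would restrict to $\Sigma$. Since $A=cq\,d\log r$ is proportional to $dr$ and $r$ is constant on $\Sigma$, we get $A|_\Sigma=0$; this kills the term $A\wedge\Lambda$, and it also forces $A_M|_\Sigma=A_T|_\Sigma$, so that $(A_T\wedge A_M)|_\Sigma$ is the wedge of a $1$-form with itself and hence vanishes. Only $B=q\,d\theta\wedge d\sigma$ survives, giving $\int_{\widetilde{M}} H'=q\int_\Sigma d\theta\wedge d\sigma=4\pi^2 q$. The conceptual content of the argument is entirely forced once one notices that $A_M+\Lambda$ and $A_T+\Lambda$ are closed; accordingly, the only genuinely delicate point is the orientation bookkeeping (the relative orientations of $T_{in}$, $M_{out}$ and $\Sigma$, and of the coordinates $\theta,\sigma$ on $\Sigma$), which is what pins down the stated sign $+4\pi^2q$ rather than its negative.
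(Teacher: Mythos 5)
Your proof is correct and takes essentially the same route as the paper's: the same decomposition of $\widetilde{M}$ into the two surgery pieces, the identical primitives (your $-\eta_T=-C_T+\Lambda\wedge A_T$ is exactly the boundary integrand in the paper's computation, and likewise on $M'$), Stokes on each piece, and the relation \eqref{eq:BAC} reducing everything to $\int_\Sigma B = 4\pi^2 q$ once the $A$-terms drop out. The only cosmetic differences are that you evaluate on an interior torus $\{r=r_0\}$ instead of $\partial T$ and kill $A\wedge\Lambda$ and $A_T\wedge A_M$ pointwise via $A|_\Sigma=0$, whereas the paper observes that the corresponding boundary integrals vanish --- the same underlying fact.
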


\begin{proof}
    We apply Stokes' Theorem again, now on $H'$, using that $d\Lambda = - dA_M$ on $\widetilde{M}\setminus T$ and $d\Lambda= - dA_T$ on $T$. Note that
  \begin{multline*}  
\int_{T} (-dC_T -  dA_T\wedge A_T +  F\wedge \Lambda) = \int_{T} ( -dC_T  +  F\wedge (A_T+\Lambda) )\\  = \int_{T} d( - C_T  +  \Lambda \wedge (A_T+\Lambda) ) = \int_{\partial T} (- C_T + \Lambda\wedge A_T) 
\end{multline*}
 and analogously for $\widetilde{M}\setminus T$. Thus, 
\begin{align*}
    \int_{\widetilde{M}} H'
          = {} &  \int_{-\partial T}  (- C_M + \Lambda\wedge A_M)
     + \int_{\partial T} (- C_T + \Lambda\wedge A_T) \\ = {} &  \int_{\partial T} (B - A_M\wedge A_T - \Lambda\wedge A) =  \int_{\partial T} B + \int_{\partial T} ( (A_M-\Lambda) \wedge A ) \\ = {} &  \int_{\partial T} q d\theta\wedge d\sigma = 4\pi^2 q \neq 0, 
\end{align*}
since, just as in Lemma \ref{lem:exact-F}, the integral involving $A$ vanishes and the one involving $B$ can be computed directly, recalling that $\partial T=\SSS^1 \times \SSS^1$. Since $\widetilde{M}$ is closed, we have that $H'$ is never exact. \end{proof}


Lemma \ref{lem:non-exact-H} implies that, unlike for usual generalized complex structures \cite{cavalcanti-gualtieri:2007, cavalcanti-gualtieri:2009}, only one surgery will not be enough. Indeed, the situation is very different: in the usual case it suffices to have a $4$-manifold with vanishing $H^3(M,\mathbb{R})$, but for $B_3$-structures we always have that $H^3(M,\mathbb{R})\neq \{0\}$ and the $3$-form in the twisting is never closed. 

Based on the proofs of Lemmas \ref{lem:exact-F} and \ref{lem:non-exact-H}, we can perform several surgeries at the same time to obtain the following.

\begin{proposition}\label{prop:many-surgeries}
    Let $M$ be a manifold endowed with a $B_3$-structure with $2r$ disjoint cosymplectic neighbourhoods $\{ N_j \}$ equipped with diffeomorphisms as in \eqref{eq:diffeo-N-D-S1}. For any choice of $p_j\in \Z$,  the manifold $$M( (p_1,-1),\ldots,(p_{2r},+1) ),$$ resulting of performing $(p_j,(-1)^j)$-Dehn fillings, admits a  $B_3$-structure with $2r$ type-change curves more than $M$.
\end{proposition}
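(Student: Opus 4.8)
The plan is to run the construction of Section~\ref{sec:p-1-surgery} in all $2r$ disjoint neighbourhoods at once and to use the alternating signs $q_j=(-1)^j$ to cancel the obstruction of Lemma~\ref{lem:non-exact-H}. First I would perform, for each $j$, the $(p_j,(-1)^j)$-surgery exactly as in Section~\ref{sec:p-1-surgery}, gluing in a solid torus $T_j$ carrying the spinor $\rho_{T_j}$ and producing the closed $3$-manifold $\widetilde M=M((p_1,-1),\ldots,(p_{2r},+1))$. As in Section~\ref{sec:the-twisting}, each surgery supplies auxiliary forms $A_{M,j},A_{T,j},B_j,C_{M,j},C_{T,j}$ supported near $N_j\cup T_j$; since the $N_j$ are disjoint, these assemble into a single global twisting $(F,H)$ with $dH+F^2=0$, and the dressed local spinors span a subbundle $K\subset\wedge^\bullet T^*_\C\widetilde M$ defining a twisted $B_3$-structure.

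I would then trivialize $F$. By the localized form of Lemma~\ref{lem:exact-F}, for any closed $1$-form $\xi$,
\[
\int_{\widetilde M}F\wedge\xi=\sum_{j=1}^{2r}\int_{\partial T_j}A_j\wedge\xi=0,
\]
since each $A_j=c_jq_j\,d\log r$ pulls back to zero on $\partial T_j$; Poincaré duality then gives $F=d\Lambda$ for some $\Lambda\in\Omega^1(\widetilde M)$. Replacing $K$ by $e^{\Lambda}K$ and applying Lemma~\ref{lem:twist-int} produces a twisted $B_3$-structure whose $F$-part vanishes and whose $H$-part is $H'=H+F\wedge\Lambda$.

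The crux is the exactness of $H'$. Because the surgery data live in disjoint regions, the computation of Lemma~\ref{lem:non-exact-H} localizes summand by summand, giving
\[
\int_{\widetilde M}H'=\sum_{j=1}^{2r}4\pi^2 q_j=4\pi^2\sum_{j=1}^{2r}(-1)^j=0,
\]
where the last sum vanishes because among $j=1,\ldots,2r$ there are exactly $r$ even and $r$ odd indices. As $\widetilde M$ is a closed oriented $3$-manifold, a $3$-form is exact iff its integral vanishes, so $H'=dB'$ for some $B'\in\Omega^2(\widetilde M)$. Applying the $B$-field $e^{B'}$ and invoking Lemma~\ref{lem:twist-int} once more (with $A=0$, starting from vanishing $F$) kills the remaining $H'$-twisting, so $e^{B'}e^{\Lambda}K$ is a genuine, untwisted $B_3$-structure. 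This alternating-sign cancellation is exactly the main obstacle: Lemma~\ref{lem:non-exact-H} shows that a single surgery forces a non-exact $H'$, and only an even number of oppositely-signed surgeries restores exactness.

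Finally I would count type-change curves and verify stability, which is routine. The $A$- and $B$-field actions fix the degree-$0$ component $\rho_0$, so the global symmetries $e^{\Lambda}$ and $e^{B'}$ do not move the type-change locus; moreover the surgeries modify the structure only inside the cosymplectic (hence type-$0$) neighbourhoods $N_j$, where $M$ had no type change. Each glued solid torus contributes the single core circle $\{z=0\}$, along which $\rho_{T_j,0}=z$ vanishes transversally, exactly as in Example~\ref{ex:type-change-CxR}. Hence $\widetilde M$ is stable and its type-change locus is that of $M$ together with these $2r$ new circles, i.e.\ it has $2r$ type-change curves more than $M$.
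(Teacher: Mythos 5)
Your proposal is correct and takes essentially the same route as the paper's own proof: simultaneous surgeries on the disjoint $N_j$, the localized Stokes computation of Lemma~\ref{lem:exact-F} giving $F=d\Lambda$, the alternating-sign cancellation $\int_{\widetilde M}H'=4\pi^2\sum_{j=1}^{2r}(-1)^j=0$ from the computation in Lemma~\ref{lem:non-exact-H}, and a final $B$-field $e^{B'}$ to remove the residual twisting. Your closing paragraph on stability and the count of the $2r$ new type-change circles (noting that $A$- and $B$-fields fix $\rho_0$ and that each core circle $\{z=0\}$ is a transverse zero) is in fact slightly more detailed than the paper's one-line remark, but the argument is the same.
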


\begin{proof}
    Let $a_j>0$ be such that $N_j\cong D_{a_j}\times \SSS^1$ (as cosymplectic manifolds) and let $b_j$ satisfying $a_j>b_j>0$. We can perform  the surgeries of Section \ref{sec:p-1-surgery} simultaneously on each $N_j$ with maps $\Psi_r$ like in \eqref{eq:map-psi},         
    $$ M( (p_1,q_1),\ldots,(p_{2r},q_{2r})\} )\cong (M\setminus \cup_{j=1}^{2r} (\overline{D_{b_j}\times \SSS^1})) \cup_{\Psi_1} T_1 \ldots \cup_{\Psi_{2r}} T_{2r}, $$
    where $q_j=(-1)^j$. The resulting twisting is calculated as in Section \ref{sec:the-twisting}: the analogue of  \eqref{eq:F-wedge-xi} will have now $2r$ vanishing integrals, so the resulting $F$ is exact $F=d\Lambda$. We can act by $e^\Lambda$ so that only the twisting by $H$ remains. Its integral, as in the proof of Lemma \ref{lem:non-exact-H}, is $$4\pi^2 \sum_{j=1}^{2r} q_j = 4\pi^2 \sum_{j=1}^{2r} (-1)^j=0.$$
    So it is exact, $H=d Y$ for a $2$-form $Y$, and we can act by $e^Y$ to get a $B_3$-structure. The final claim follows from the fact that each surgery along $T_j$ introduces a type-change curve.
\end{proof}

By Remark \ref{remark:framing}, the diffeomorphism class of  $$M( (p_1,-1),\ldots,(p_{2r},+1) )$$ above depends on the choice of peripheral system for each cosymplectic neighbourhood.

\section{First examples}\label{sec:examples}

We apply now Proposition \ref{prop:many-surgeries} to generate new $B_3$-structures. We shall work with products $\Sigma\times S^1$ for $\Sigma$  a surface and do surgery on 
the $S^1$ factor, so that the peripheral system or choice of product structure
on the tubular neighbourhood is clear.

We start with an example in which we can compute everything explicitly. Consider the manifold $\SSS^2\times \SSS^1$ and the cosymplectic $B_3$-structure of Example \ref{ex:S2xS1}. Let $N$ and $S$ be the north and south poles of $\SSS^2$. We can perform $(1,1)$ and $(1,-1)$ surgeries along two non-intersecting cosymplectic neighbourhoods around $\{N\}\times \SSS^1$ and $\{S\}\times \SSS^1$. If we regard $\SSS^2\times \SSS^1$ as   $(D^2\times \SSS^1) \cup_\partial (D^2\times \SSS^1)$, the first surgery gives the manifold 
    $$ (D^2\times \SSS^1) \cup_\partial (S^1\times D^2) =\SSS^3,$$
whereas the second surgery gives again
$$ (S^1\times D^2) \cup_\partial (S^1\times D^2)= \SSS^1\times \SSS^2. $$
By choosing neighbourhoods around $N$ and $S$ to be hemispheres, the resulting $B_3$-structure is diffeomorphic to the one given on $ (\C\cup\{\infty\}) \times \SSS^1$ by 
\begin{equation}\label{eq:type-change-S2xS1}
\begin{cases}
			z+idz+idz\wedge d\theta, & \text{on } \C \times \SSS^1, \\
            \frac{1}{z}
            -id(\frac{1}{z})-id\frac{1}{z}\wedge d\theta, & \text{on } (\C^\times \cup \{\infty\} ) \times \SSS^1.
\end{cases}
\end{equation}

\begin{remark}
   By doing surgeries $(p_1,-1)$, $(p_2,1)$ on $\SSS^2\times \SSS^1$ we obtain the lens space $L(p_1-p_2,1)$ by \cite[Exercise 10.3.6]{martelli2022introduction}. 
\end{remark}


We replace now the sphere by a closed genus-$g$ oriented surface. We consider $\Sigma_g\times \SSS^1$ with the usual cosymplectic structure, and two points $P,Q\in\Sigma_g$ to perform a $(0,1)$ and a $(0,-1)$-surgery along disjoint neighbourhoods of $\{P\}\times \SSS^1$ and $\{Q\}\times \SSS^1$. The resulting manifold, $M:=(\Sigma_g\times \SSS^1)( (0,1),(0,-1) )$, has a $B_3$ structure with two type-changing curves.
A quick way to determine the resulting manifold is to compute its fundamental group.
Start with $\pi_1((\Sigma_g-\{P,Q\})\times S^1)\cong
F_{2g+1}\times \mathbb Z$, where $F_{2g+1}$ denotes the 
free group of rank $2g+1$. Then gluing the solid tori of surgery kills the factor $\mathbb Z$, and the resulting 
fundamental group is $F_{2g+1}$. By Thurston's 
geometrization
the resulting manifold must be  
 $\#^{2g+1} (\SSS^2\times \SSS^1)$. 
\begin{proposition}
    The manifold  $\#^{2g+1} (\SSS^2\times \SSS^1)$, for $g\geq 1$, is a $B_3$-manifold that is neither cosymplectic nor nacs.
\end{proposition}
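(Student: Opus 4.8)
The $B_3$-structure itself is already in hand: applying Proposition \ref{prop:many-surgeries} to $\Sigma_g\times\SSS^1$ with a $(0,1)$- and a $(0,-1)$-surgery yields a $B_3$-structure with two type-change curves on a closed orientable manifold which, by the fundamental-group computation preceding the statement, is diffeomorphic to $\#^{2g+1}(\SSS^2\times\SSS^1)$. So the real content is the two negative assertions, and the plan is to deduce both from the single topological input that
$$ \pi_1\bigl(\#^{2g+1}(\SSS^2\times\SSS^1)\bigr)\cong F_{2g+1} $$
is a free group of rank $2g+1\geq 3$ for $g\geq 1$.

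For the cosymplectic obstruction I would invoke the characterization recalled in Remark \ref{rem:cosym-nacs}: a closed cosymplectic $3$-manifold is a symplectic mapping torus, hence fibers over $\SSS^1$ with connected fiber a closed orientable surface $\Sigma$. The homotopy exact sequence of the fibration then gives
$$ 1 \longrightarrow \pi_1(\Sigma) \longrightarrow F_{2g+1} \longrightarrow \Z \longrightarrow 1, $$
exhibiting $\pi_1(\Sigma)$ as a finitely generated normal subgroup of $F_{2g+1}$ of infinite index. By the classical fact that a nontrivial finitely generated normal subgroup of a free group has finite index, $\pi_1(\Sigma)$ must be trivial; then $\Sigma=\SSS^2$ and $F_{2g+1}\cong\Z$, contradicting $2g+1\geq 3$. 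Hence the manifold is not cosymplectic.

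For the nacs obstruction I would again use Remark \ref{rem:cosym-nacs}: a closed nacs $3$-manifold is an orientable Seifert fibered space over an orientable base, so it suffices to show $\#^{2g+1}(\SSS^2\times\SSS^1)$ admits no Seifert fibration. Its prime decomposition has $2g+1\geq 3$ summands, so it is not prime; in particular it is neither $\SSS^2\times\SSS^1$ (which is prime) nor $\mathbb{RP}^3\#\mathbb{RP}^3$ (whose fundamental group $\Z/2\ast\Z/2$ is not free). By the classification of reducible Seifert fibered spaces, which asserts that the only reducible orientable Seifert fibered $3$-manifolds are exactly these two, our manifold carries no Seifert fibration and is therefore not nacs.

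The assembly is routine; the main obstacle is citing the external inputs in precisely the correct form — the finite-index theorem for finitely generated normal subgroups of free groups, and above all the classification of reducible Seifert fibered spaces — and verifying that the orientability hypotheses on total space and base match those in Remark \ref{rem:cosym-nacs}. As a backup for the nacs part, avoiding the reducibility classification, I would use the centrality of the regular fiber in a Seifert fibration over an orientable base: it forces a central infinite-cyclic (or, by torsion-freeness of $F_{2g+1}$, trivial) subgroup, whence $\pi_1$ would be isomorphic to a closed $2$-orbifold group $\pi_1^{\mathrm{orb}}(B)$, which cannot be free of rank $\geq 2$; I would keep this in reserve in case the cleanest citable form of the reducibility classification proves awkward.
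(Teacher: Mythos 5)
Your proposal is correct, and it diverges from the paper in an interesting way on the cosymplectic half. For the nacs half you and the paper use the same fact in different clothing: the paper cites Geiges plus the statement that Seifert manifolds other than $\R P^3\#\R P^3$ are prime, while you cite the equivalent classification that the only reducible orientable Seifert fibered spaces are $\SSS^2\times\SSS^1$ and $\R P^3\#\R P^3$ (your restriction to orientable total spaces is the right one, since the third exception $\SSS^2\,\widetilde{\times}\,\SSS^1$ is nonorientable); both correctly exclude the two exceptional cases. For the cosymplectic half the paper argues geometrically that any symplectic mapping torus is \emph{prime} --- it is either $\SSS^2\times\SSS^1$ or aspherical with universal cover $\R^3$ --- so that the non-prime manifold $\#^{2g+1}(\SSS^2\times\SSS^1)$ is excluded; you instead run the homotopy exact sequence of the fibration over $\SSS^1$ and invoke the Greenberg--Karrass--Solitar theorem that a nontrivial finitely generated normal subgroup of a free group has finite index. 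Your argument is valid, but note it can be lightened: by Nielsen--Schreier the kernel $\pi_1(\Sigma)\leq F_{2g+1}$ is already free, and a closed orientable surface group is free only when trivial, so $\Sigma=\SSS^2$ and $F_{2g+1}\cong\Z$ gives the contradiction without the finite-index theorem. As for trade-offs: the paper's route is uniform (a single property, non-primality, kills both structures at once, with prime decomposition as the only input), whereas your route isolates a purely group-theoretic obstruction --- $\pi_1$ free of rank at least $3$ --- which applies verbatim to any manifold with such fundamental group without first identifying its connected-sum decomposition; your reserve argument via centrality of the regular fiber is also essentially sound (free groups of rank $\geq 2$ have trivial center, and closed $2$-orbifold groups are never free of rank $\geq 2$), though it needs the standard caveats about fibrations with finite fiber class.
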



\begin{proof}
	We have defined a $B_3$-structures on the manifold  $\#^{2g+1} (\SSS^2\times \SSS^1)$, for $g\geq 1$, which is a non-trivial connected sum, that is, not prime. 
	
	Cosymplectic manifolds are symplectic mapping tori (as mentioned in Remark \ref{rem:cosym-nacs}). A mapping torus is  prime, because either it is  $S^2\times S^1$ or its universal cover is $\mathbb R^3$, the universal cover of the fibre times the universal cover of $S^1$ \cite[Sec. 9.2.6]{martelli2022introduction}.
	On the other hand, by \cite[Thm. 2]{geiges-normal}, nacs manifolds are Seifert manifolds, but are not diffeomorphic to $\R P^3\#\R P^3$,  hence they are prime \cite[Sec. 10.3.12]{martelli2022introduction}. \end{proof}

We can apply more pairs of surgeries to get that the family $$\#^{2(g+s)-1} (\SSS^2\times \SSS^1),$$ for $g\geq 1$ and $s\geq 1$, is neither cosymplectic nor nacs but admits a $B_3$-structure whose type-change locus consists of $2s$ circles.  If we replace the 
$(0,\pm 1)$-surgeries by $(p,\pm 1)$-surgeries with all $p\neq 0$,  then the 
result is an orientable Seifert manifold
 \cite[Sec. 10.3]{martelli2022introduction} with an
 orientable base, hence it is nacs by Remark \ref{rem:cosym-nacs}. If any of the coefficients is
$(0,\pm 1)$, then the result can be seen not to be prime, see \cite[Cor. 10.3.43]{martelli2022introduction}.   A natural question now is whether we can find a geometric manifold with a $B_3$-structure that does admit  neither nacs nor cosymplectic structures. We actually prove much more.


\section{Every closed orientable $3$-manifold admits a $B_3$-structure}\label{sec:every-closed-B3}


We can and do assume our manifold to be connected. We will give a constructive proof organized as follows. Every closed orientable 3-manifold has an open book decomposition with connected binding, so it is a $(0,1)$-surgery on the mapping torus of a surface diffeomorphism (Section \ref{sec:open-book-mapping-torus}). We will prove that
it is the mapping torus of an area preserving diffeomorphism
(Section \ref{sec:invariant-area-form}) that fixes a disk
(Section \ref{sec:identitiy-around-fixed-point}), and we will apply surgery to a
pair of parallel curves in the mapping torus of this disk (Section \ref{sec:two-surgeries-B3-structure}).


\subsection{Every closed orientable $3$-manifold is obtained via surgery on a mapping torus with a fixed point}\label{sec:open-book-mapping-torus}

We review this fact for completeness. We recall the definition and properties of open book decompositions \cite{etnyre:06}.

\begin{definition}
    An open book decomposition of a closed $3$-manifold $M$ is given by a $1$-dimensional submanifold $B\subset M$, called the binding, and a fibration $\pi:M\setminus B\to \SSS^1$ such that $\pi^{-1}(\theta)$, called an open page, is the interior of a compact surface $\Sigma_\theta\subset M$ with $\partial \Sigma_\theta=B$.
\end{definition}

The following is a key result.

\begin{theorem}[\cite{alexander1923lemma,acuna:74,myers:78}]\label{thm:open-book}
    A closed oriented $3$-manifold admits an open book decomposition with {connected} binding.
\end{theorem}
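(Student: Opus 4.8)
The plan is to first produce \emph{some} open book decomposition of $M$ by a classical branched-covering argument, and then to merge the binding components one at a time by a stabilization that leaves the diffeomorphism type of $M$ unchanged. This separates the statement into an existence half (any open book) and a refinement half (connected binding), which are exactly the contributions of Alexander and of Acuña--Myers.

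For existence, I would start from the fact that every closed orientable $3$-manifold $M$ is a branched covering $p\colon M\to \SSS^3$ branched over a link $L\subset \SSS^3$. Equip $\SSS^3$ with its simplest open book: the binding is an unknotted circle $A$ (the braid axis) and the pages are the disks of the fibration $\SSS^3\setminus A\cong \D^2\times\SSS^1\to\SSS^1$. By Alexander's braiding theorem, $L$ may be isotoped to a closed braid, that is, so as to be transverse to every page and disjoint from $A$. Pulling the fibration back along $p$ then yields a fibration of $M\setminus p^{-1}(A)$ over $\SSS^1$ whose fibres are branched covers of the disk pages; since $A$ avoids the branch locus, $p^{-1}(A)$ is an embedded unbranched $1$-submanifold, hence a disjoint union of circles. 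This exhibits an open book with page $\Sigma$ and monodromy $\varphi$ on $M$, with binding $\partial\Sigma=p^{-1}(A)$, possibly disconnected.

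To achieve connected binding, I would remove binding components by Hopf-band stabilization. Given the page $\Sigma$ with two distinct boundary circles $B_i$ and $B_j$, I would attach an embedded $1$-handle to $\Sigma$ joining $B_i$ to $B_j$ and compose the monodromy $\varphi$ with a right-handed Dehn twist along the core curve of the resulting band. Geometrically this plumbs a positive Hopf band onto the open book, which is a Murasugi sum with the standard open book of $\SSS^3$; hence it replaces $M$ by $M\#\SSS^3\cong M$ and does not change the ambient manifold, while on the level of the page it fuses $B_i$ and $B_j$ into a single boundary circle, so the number of binding circles drops by one. Starting from $n$ binding components, $n-1$ such stabilizations produce an open book on $M$ with a single boundary circle, that is, connected binding.

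The main obstacle is to verify that stabilization is an ambient-preserving move: one must check that plumbing the Hopf band really amounts to a Murasugi sum with the standard open book of $\SSS^3$, so that it replaces $M$ by $M\#\SSS^3\cong M$. One must also take care that the stabilizing $1$-handle is attached with its two feet on \emph{distinct} boundary circles, so that these merge and the number of binding components decreases (attaching both feet to a single component would instead increase it, at the cost of raising the page genus). The remaining delicate point, in the existence step, is Alexander's braiding theorem together with the fact that the pulled-back map is a genuine locally trivial fibration over $\SSS^1$, which holds because transversality of $L$ to the pages makes $p$ fibre-preserving away from the branch locus. Once these are in place, the construction terminates and yields the desired open book with connected binding.
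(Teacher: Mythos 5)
Your argument is correct, but note that there is nothing internal to compare it against: the paper states Theorem \ref{thm:open-book} as a classical result, with citations to Alexander and to Gonz\'{a}lez-Acu\~{n}a and Myers, and gives no proof. Your two-step scheme is precisely the standard proof that those citations encode. The existence half --- realize $M$ as a covering $p\colon M\to \SSS^3$ branched over a link $L$, braid $L$ relative to the disk open book of $\SSS^3$ by Alexander's braiding lemma, and pull the fibration back --- is Alexander's original argument, and your verification points are the right ones: away from the branch locus $p$ is a local diffeomorphism, while at a branch point transversality of $L$ to the pages gives the local model $(z,t)\mapsto (z^k,t)$ with pages $\{t=\mathrm{const}\}$, so $\pi\circ p$ is still a submersion there; and since $A$ is disjoint from $L$, the covering is unbranched near $p^{-1}(A)$, so the open-book collar of the axis pulls back and $p^{-1}(A)$ is a genuine binding. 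The refinement to connected binding by plumbing positive Hopf bands is the standard modern route to the Gonz\'{a}lez-Acu\~{n}a--Myers statement (their original arguments were different); the two facts you isolate are exactly what must be checked, namely that Murasugi sum with the Hopf-band open book of $\SSS^3$ is ambient-preserving, producing an open book of $M\#\SSS^3\cong M$ (Stallings, Gabai), and that the stabilizing band has its feet on two \emph{distinct} boundary circles $B_i$, $B_j$, so that the boundary count drops by one rather than rising. Granting these, induction on the number of binding components terminates after $n-1$ stabilizations, as you say. One cosmetic remark: for the purposes of this paper only the statement is used (to write $M$ as a $(0,1)$-surgery on a mapping torus in Section \ref{sec:open-book-mapping-torus}), so your proof is a self-contained substitute for the citations rather than an alternative to an argument in the text.
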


All  open pages are diffeomorphic to the interior of a surface with boundary, say, $\Sigma$. The open book is diffeomorphic to the manifold obtained from the mapping torus $M_\phi$ of a diffeomorphism $\phi:\Sigma\to \Sigma$ that is the identity around $\partial\Sigma$, by collapsing the $\SSS^1$ factor in the product  
$\partial M_\phi\cong \partial\Sigma\times \SSS^1$.
Equivalently, assuming that the binding is connected, the initial $M$ is the result of attaching a solid torus along $\partial M_\phi$, so that the surgery meridian is  the  $\SSS^1$ factor (and the core of the solid torus is the binding). On the other hand, let 
$N$ denote the result of attaching a solid torus along  $\partial M_\phi$ but with surgery meridian
$\partial \Sigma$. The $\SSS^1$ factor of the solid torus is compatible with the bundle structure, and thus $N$ is the mapping torus of the diffeomorphism
obtained by attaching a disk to $\partial\Sigma$
and extending $\phi$ as the identity on this disk. 
Furthermore, we can pass from $N$ to the initial manifold $M$
by Dehn surgery because both $M$ and $N$ are the result of attaching a solid
torus $D^2\times S^1$ to $\partial M_\phi$. The coefficient of this surgery is $(0,1)$
in the standard peripheral system, because the roles of $\partial D^2$ and the factor $S^1$ are switched. Thus we have proved the following corollary.

\begin{corollary}
\label{cororllary:every-manifold-surgery-mapping-torus}
    Every $3$-manifold is obtained via $(0,1)$-surgery on a mapping torus (with a fixed point).    
\end{corollary}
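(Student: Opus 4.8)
The plan is to build directly on the open book decomposition guaranteed by Theorem \ref{thm:open-book}, which provides a \emph{connected} binding $B\subset M$ and a fibration $\pi\colon M\setminus B\to \SSS^1$ whose pages are the interior of a fixed compact surface $\Sigma$ with $\partial\Sigma=B\cong\SSS^1$. First I would encode this data as a mapping torus: the monodromy of $\pi$ is a diffeomorphism $\phi\colon\Sigma\to\Sigma$ that can be taken to be the identity on a collar of $\partial\Sigma$, and $M$ is recovered from the mapping torus $M_\phi$ by the standard collapse of the $\SSS^1$ factor in $\partial M_\phi\cong\partial\Sigma\times\SSS^1$. Since the binding is connected, $\partial M_\phi$ is a single torus and this collapse is precisely a solid-torus filling of it, with filling meridian the $\SSS^1$ factor and with core the binding $B$.

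Next I would introduce the comparison manifold $N$ by filling the \emph{same} boundary torus $\partial M_\phi$ with a solid torus, but choosing the filling meridian to be $\partial\Sigma$ rather than the $\SSS^1$ factor. Because the $\SSS^1$ direction of this new solid torus is tangent to the fibration direction, the bundle structure extends across the filling: capping $\partial\Sigma$ with a disk $D^2$ and extending $\phi$ by the identity on $D^2$ produces a closed surface $\widehat\Sigma=\Sigma\cup_\partial D^2$ together with a diffeomorphism $\widehat\phi\colon\widehat\Sigma\to\widehat\Sigma$ whose mapping torus is exactly $N$. By construction $\widehat\phi$ restricts to the identity on the capped disk $D^2$, so it has a fixed disk (in particular a fixed point), which is the feature the surgery of Section \ref{sec:two-surgeries-B3-structure} will exploit.

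Finally I would compare $M$ and $N$ directly. Both arise from the same manifold-with-boundary $M_\phi$ by gluing a solid torus $D^2\times\SSS^1$ along $\partial M_\phi$, differing only in which boundary curve becomes the filling meridian; hence passing from $N$ to $M$ is a Dehn surgery on the core of the solid torus glued to form $N$. Working in the peripheral system adapted to $N$, where $\partial\Sigma$ is the meridian and the $\SSS^1$ factor is the longitude, the filling that recovers $M$ interchanges the roles of $\partial D^2$ and the $\SSS^1$ factor, which is exactly the $(0,1)$ coefficient. I expect the main subtlety to be purely in the bookkeeping: checking that $\partial\Sigma\times\{\mathrm{pt}\}$ and $\{\mathrm{pt}\}\times\SSS^1$ form a basis of $H_1(\partial M_\phi;\Z)$ and that the interchange of their roles is faithfully recorded as the $(0,1)$ framing in this peripheral system, rather than any genuine topological difficulty.
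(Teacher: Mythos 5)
Your proposal is correct and follows essentially the same route as the paper: starting from the open book with connected binding, comparing the two solid-torus fillings of $\partial M_\phi$ (the one with meridian the $\SSS^1$ factor giving $M$, the one with meridian $\partial\Sigma$ giving the mapping torus $N$ of the monodromy capped off by the identity on a disk, which supplies the fixed point), and reading off the coefficient $(0,1)$ from the interchange of the roles of $\partial D^2$ and the $\SSS^1$ factor. The bookkeeping you flag at the end is exactly the content of the paper's Remark \ref{remark:framing} and poses no difficulty.
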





\subsection{Existence of an invariant area form}\label{sec:invariant-area-form}

We work now on the mapping torus $M_\phi$ of a diffeomorphism (with a fixed point) $\phi:\Sigma\to \Sigma$, which is orientation preserving as $M_\phi$ is orientable. Take any area form $\omega$ on $\Sigma$. We have that $\phi^*\omega$ has the same area as $\omega$, so by Moser's argument, there exists a family of diffeomorphisms $\{f_t\}$  isotopic to the identity such that $f_t^*(t\phi^*\omega+(1-t)\omega)=\omega$, so, for $f:=f_1$, we have $f^* \phi^* \omega=\omega$.  The map $\phi\circ f$ is an area preserving diffeomorphism such that $M_{\phi \circ f}\cong M_\phi$ (since $\phi \circ f$ and $\phi$ are isotopic), but it may not have a fixed point any more. For this, we deform the gluing diffeomorphism~further.

Take any $x\in \Sigma$, consider a path to $(\phi\circ f)(x)$ and a connected and simply connected neighbourhood $V$ of this path. Cover the path with charts $U_0, \ldots, U_r\subset V$ so that $x_0:=x\in U_0$, $x_r:=(\phi\circ f)(x)\in U_r$ and choose points $x_j\in U_{j-1}\cap U_j$ for $0<j<r$. We use now a standard argument of Hamiltonian flows.

\begin{lemma}\label{lem:symplecto-transitive}
Symplectomorphisms act transitively on any two points of a chart. 
\end{lemma}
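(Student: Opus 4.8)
The plan is to prove this by the standard Hamiltonian flow argument, realizing a path joining the two points as an integral curve of a time-dependent Hamiltonian vector field. Let $U$ be a chart, which I may assume to be a connected open subset of $\R^{2m}$ carrying the symplectic form $\omega$ (in our surface case $2m=2$ and $\omega$ is just an area form). Fix two points $p,q\in U$. Since $U$ is connected, I first choose a smooth path $\gamma:[0,1]\to U$ with $\gamma(0)=p$ and $\gamma(1)=q$.

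The idea is then to produce a compactly supported, time-dependent Hamiltonian $H_t$ whose flow carries $p$ to $q$ along $\gamma$. At each time $t$ the velocity $\dot\gamma(t)\in T_{\gamma(t)}U$ determines, by nondegeneracy of $\omega$, a unique covector $\alpha_t:=\iota_{\dot\gamma(t)}\omega\in T^*_{\gamma(t)}U$. I would construct $H_t\in C^\infty_c(U)$, depending smoothly on $t$, with $dH_t|_{\gamma(t)}=\alpha_t$: fixing a bump function $\chi$ equal to $1$ on a neighbourhood of $\gamma([0,1])$ and compactly supported in $U$, one may set $H_t(y)=\langle\alpha_t,\,y-\gamma(t)\rangle\,\chi(y)$ in the linear coordinates of the chart, so that $H_t$ agrees with the affine function $\langle\alpha_t,\,y-\gamma(t)\rangle$ near the path and hence has the prescribed differential at $\gamma(t)$. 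By construction the associated Hamiltonian vector field satisfies $X_{H_t}(\gamma(t))=\dot\gamma(t)$, so $\gamma$ solves the nonautonomous ODE $\dot\gamma(t)=X_{H_t}(\gamma(t))$ with $\gamma(0)=p$; uniqueness of integral curves then forces the flow $\psi_t$ of $X_{H_t}$ to satisfy $\psi_t(p)=\gamma(t)$.

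Finally, each $\psi_t$ is a symplectomorphism, as Hamiltonian flows preserve $\omega$, and it is compactly supported in $U$ because $H_t$ is; extending by the identity yields a symplectomorphism of the ambient surface supported in the chart. Taking $t=1$ gives $\psi_1(p)=q$, and transitivity on the chart follows. The argument is essentially routine, so the only point requiring care is arranging $H_t$ to be simultaneously compactly supported in $U$, smooth in $t$, and to realize the prescribed first-order data $dH_t|_{\gamma(t)}=\alpha_t$ along the \emph{moving} basepoint $\gamma(t)$; the bump-function construction above handles all three at once, and it is precisely the compact support that guarantees the resulting map extends to a genuine symplectomorphism of the closed surface, which is what the subsequent chart-by-chart deformation of $\phi\circ f$ requires.
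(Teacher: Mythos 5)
Your proof is correct, and it is a genuinely more general variant of the paper's argument, though both share the same core idea: cut off an affine Hamiltonian with a bump function so that its flow is the desired motion near the points and the identity outside a compact set. The paper does this autonomously: for points $y,z$ it takes the single time-independent Hamiltonian $f(x)=\xi(x)\,\omega(z-y,x)$, whose vector field is the constant field $z-y$ on the set where $\xi\equiv 1$, and flows for time $1$. This is shorter and completely explicit, but it implicitly requires the straight segment from $y$ to $z$ to stay in the region where the bump equals $1$ (harmless for a chart, which one may take convex, but a hypothesis nonetheless). You instead flow along an arbitrary smooth path $\gamma$ with a time-dependent Hamiltonian $H_t$ satisfying $dH_t|_{\gamma(t)}=\iota_{\dot\gamma(t)}\omega$, invoking uniqueness for the nonautonomous ODE to conclude $\psi_t(p)=\gamma(t)$. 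This buys transitivity on any connected open set, not just a convex chart, and dispenses with the segment condition, at the cost of handling smooth $t$-dependence and a time-dependent flow --- points you correctly address via the single bump $\chi$ equal to $1$ on a neighbourhood of the compact set $\gamma([0,1])$. Both constructions produce compactly supported Hamiltonian isotopies, so either serves the paper's subsequent purpose (composing maps isotopic to the identity to keep the mapping torus unchanged); with your stronger version, the paper's chart-by-chart covering of the path and the intermediate points $x_j$ could even be bypassed by a single application on the connected neighbourhood $V$. One minor remark: your convention $\iota_{X_H}\omega=dH$ differs in form from the paper's $dH=\omega(X_H,\cdot)$, but these agree, and in any case a sign discrepancy would only reverse the direction of travel along $\gamma$, which is immaterial.
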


\begin{proof}
    Given two points of $y,z\in U\subset \mathbb{R}^n$ in a symplectic chart, consider two compact subsets $C\subset C'\subset U$ containing them and a bump function $\xi$ that is $1$ on $C$ and $0$ outside~$C'$.
    
    Consider the function $f(x):=\xi(x)\omega(z-y, x )$ with associated Hamiltonian vector field $X_f$, i.e., such that $df=\omega(X_f, \, )$. Since $x\mapsto \omega(z-y,x)$ is a linear map, on $C$ we have that $X_f=z-y$. The one-parameter subgroup integrating $X_f$ at time $1$ gives a symplectomorphism sending $y$ to $z$ which is the identity outside $C'$. 
\end{proof}

We apply Lemma \ref{lem:symplecto-transitive} to find symplectomorphisms $\phi_j$ sending $x_j$ to $x_{j+1}$ for $0\leq j<r$ and that are the identity outside $U_j$. The composition of these maps $g:=\phi_{r-1}\circ \ldots\circ \phi_0$ sends $x_0$ to $x_r$ and is the identity outside of $V$. It is isotopic to the identity (as it is a composition of maps isotopic to the identity) and we thus have that $M_{g^{-1}\circ \phi\circ f}=M_{\phi}$ with $g^{-1}\circ \phi\circ f$ having a fixed point and leaving the area form $\omega$ invariant.


\subsection{The gluing diffeomorphism can be chosen to be the identity around the fixed point} \label{sec:identitiy-around-fixed-point}


We use Dacorogona-Moser's Theorem:

\begin{theorem}[\cite{dacorogna-moser:1990}, {\cite[Thm. 10.11]{csato-dacorogna-kneuss}}]\label{thm:dacorogna-moser}
    Let $\Omega$ be a bounded connected open set in $\R^n$. Let $f,g\in\cCi(\overline{\Omega})$ such that i) $f,g > 0$ in $\overline{\Omega}$, ii) $\int_\Omega f=\int_\Omega g$ and iii) $\supp(f-g)\subset \Omega$. Then there exists $\phi\in \Diff(\overline{\Omega})$ such that $\phi^*g=f$ and $\supp(\phi-\Id)\subset \Omega$.
\end{theorem}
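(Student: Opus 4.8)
The plan is to prove this by Moser's flow method, tracking supports carefully so that the resulting diffeomorphism is the identity near $\partial\Omega$. First I would reinterpret the densities $f,g$ as volume forms $\mu:=f\,dx$ and $\nu:=g\,dx$, with $dx:=dx_1\wedge\cdots\wedge dx_n$ the standard orientation of $\R^n$, so that the sought identity $\phi^*g=f$ becomes $\phi^*\nu=\mu$ (for an orientation-preserving $\phi$, as any time-one flow map will be). I would then interpolate linearly, setting $\mu_t:=(1-t)\mu+t\nu=f_t\,dx$ with $f_t:=(1-t)f+tg$ for $t\in[0,1]$. Hypothesis (i) is exactly what makes this work: since $f,g>0$ on $\overline{\Omega}$, the convex combination $f_t$ is strictly positive, so each $\mu_t$ is a nowhere-vanishing volume form. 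The goal is to produce a time-dependent flow $\{\phi_t\}$ with $\phi_0=\Id$ and $\phi_t^*\mu_t=\mu$ for all $t$; then $\phi:=\phi_1$ is the desired map.

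Differentiating $\phi_t^*\mu_t=\mu$ and writing $X_t$ for the generating vector field, the requirement becomes $\dot\mu_t+L_{X_t}\mu_t=0$, i.e. $L_{X_t}\mu_t=(f-g)\,dx$. Since $\mu_t$ is a top-degree form we have $d\mu_t=0$, and Cartan's formula reduces this to the cohomological equation $d(\iota_{X_t}\mu_t)=(f-g)\,dx$. I would solve it by first producing a single compactly supported $(n-1)$-form $\beta$ on $\Omega$ with $d\beta=(f-g)\,dx$, and then defining $X_t$ by $\iota_{X_t}\mu_t=\beta$. The second step is immediate: because $\mu_t$ is nowhere zero, contraction with $\mu_t$ is a fibrewise isomorphism, so $X_t$ is uniquely determined and smooth in $(t,x)$; moreover $X_t$ vanishes wherever $\beta$ does, so $\supp X_t\subset\supp\beta$ for all $t$.

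I expect the existence of the compactly supported primitive $\beta$ to be the crux of the argument. The $n$-form $(f-g)\,dx$ is compactly supported in $\Omega$ by hypothesis (iii) and has total integral zero by hypothesis (ii). Since $\Omega$ is a connected, orientable open subset of $\R^n$, integration induces an isomorphism $H^n_c(\Omega)\cong\R$; the vanishing of $\int_\Omega (f-g)\,dx$ therefore says precisely that the class of $(f-g)\,dx$ in compactly supported de Rham cohomology is trivial, which yields a compactly supported $(n-1)$-form $\beta$ on $\Omega$ with $d\beta=(f-g)\,dx$. Connectedness of $\Omega$ is essential at this point, since otherwise the single zero-integral condition would not suffice to trivialize the class.

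Finally, because $X_t$ is supported in the fixed compact set $\supp\beta\subset\Omega$, uniformly in $t$, its flow $\phi_t$ is complete on $[0,1]$, preserves $\Omega$, and equals the identity off $\supp\beta$. By construction $\phi:=\phi_1$ satisfies $\phi^*\nu=\mu$, that is $\phi^*g=f$, while $\supp(\phi-\Id)\subset\supp\beta$, a compact subset of $\Omega$. As $\phi$ is the identity on a neighbourhood of $\overline{\Omega}\setminus\supp\beta$, it extends to an element of $\Diff(\overline{\Omega})$ that is the identity near $\partial\Omega$, which is exactly the assertion.
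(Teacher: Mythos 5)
Your proof is correct. Note that the paper does not prove this statement at all: it is imported verbatim from the literature (Dacorogna--Moser, and Cs\'at\'o--Dacorogna--Kneuss, Thm.\ 10.11), so there is no internal proof to compare against; your argument is, in essence, the classical proof from those sources, namely Moser's flow method with the support condition tracked throughout. The two points that carry the weight are handled properly: positivity of $f_t=(1-t)f+tg$ on $\overline{\Omega}$ makes the linear path of volume forms nondegenerate so that $\iota_{X_t}\mu_t=\beta$ defines $X_t$, and the existence of a compactly supported primitive $\beta$ with $d\beta=(f-g)\,dx$ follows from $H^n_c(\Omega)\cong\R$ for the connected orientable $\Omega$ together with hypotheses (ii) and (iii) --- this cohomological step is exactly equivalent to solving $\operatorname{div} w=f-g$ with compactly supported $w$, which is how the cited references phrase it. One small point worth making explicit: boundedness of $\Omega$ is what turns hypothesis (iii) into compactness of $\supp(f-g)$ (a closed subset of the compact $\overline{\Omega}$), which you then use both for the primitive and for completeness of the flow; you use this implicitly, and it would be worth one sentence.
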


With it we can prove the following.

\begin{lemma}
\label{lemma:area_preserving_correction}
 The mapping torus of an area-preserving diffeomorphism with a fixed point $p$ is diffeomorphic to the mapping torus of an area-preserving diffeomorphism that is the identity around $p$.
\end{lemma}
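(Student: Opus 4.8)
The plan is to realize the desired map as a small correction of $\phi$ that is trivial near $p$, exploiting the standard fact that mapping tori of isotopic diffeomorphisms are diffeomorphic. Concretely, it suffices to produce an area-preserving $h\in\Diff(\Sigma)$ that is isotopic to the identity and satisfies $h=\phi$ on a neighbourhood of $p$: then $\psi:=\phi\circ h^{-1}$ is area-preserving (a composition of area-preserving maps), equals the identity near $p$ (since there $h^{-1}=\phi^{-1}$), and is isotopic to $\phi$ (as $h^{-1}$ is isotopic to the identity), whence $M_\psi\cong M_\phi$. The whole problem thus reduces to constructing such an $h$, which I would do in two stages: a soft topological stage producing a smooth, not yet area-preserving model, and a Moser-type stage fixing the area.

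For the first stage I work in a chart around $p=0$. Since $\phi$ is orientation-preserving and fixes $p$, it is locally isotopic to the identity through diffeomorphisms fixing $p$: the Alexander rescaling $s\mapsto \tfrac1s\phi(s\,\cdot\,)$ connects $\phi$ to its linearization $d\phi_p\in\GL^+(2,\R)$, and $\GL^+(2,\R)$ is connected. Cutting off this local isotopy with a radial bump function yields $h_0\in\Diff(\Sigma)$, isotopic to the identity, with $h_0=\phi$ on a small disk $D_0\ni p$ and $h_0=\id$ outside a slightly larger disk $D_1$; for $D_0,D_1$ small enough $h_0$ is a genuine diffeomorphism.

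For the second stage I correct the area form without disturbing $p$. The two-forms $h_0^*\omega$ and $\omega$ are area forms that agree on $D_0$ (where $h_0=\phi$ is area-preserving) and outside $D_1$ (where $h_0=\id$); since $h_0$ is an orientation-preserving diffeomorphism of the closed surface $\Sigma$, they have the same total integral, and therefore the same integral over any annulus $A$ squeezed strictly between $D_0$ and $D_1$, outside of which the discrepancy $h_0^*\omega-\omega$ vanishes. Applying Theorem~\ref{thm:dacorogna-moser} on $A$ produces $g$, equal to the identity near $\partial A$, with $g^*(h_0^*\omega)=\omega$; extending $g$ by the identity and setting $h:=h_0\circ g$ gives $h^*\omega=g^*(h_0^*\omega)=\omega$. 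Because $g=\id$ near $p$ we still have $h=\phi$ near $p$, and $h$ is isotopic to the identity, since $h_0$ is and $g$, being supported in a disk and the identity near its boundary, is isotopic to the identity by the Alexander trick.

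The step I expect to be the main obstacle is the bookkeeping of supports in the second stage. Dacorogna–Moser must be applied on an annulus lying strictly inside the region where $h_0^*\omega$ and $\omega$ already coincide, so that the density discrepancy is compactly supported there and, crucially, the correcting diffeomorphism $g$ is the identity on a neighbourhood of $p$; otherwise the correction could reintroduce nontrivial behaviour at the fixed point. The equal-integral hypothesis of Theorem~\ref{thm:dacorogna-moser} on this annulus is exactly what the global degree-one computation for $h_0$ supplies, so the two stages dovetail; the remaining verifications (that $h_0$ is an honest diffeomorphism and that $g$ extended by the identity is isotopic to the identity) are routine.
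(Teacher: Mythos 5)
Your proof is correct and follows essentially the same strategy as the paper's: interpolate between $\phi$ and the identity across an annulus around the fixed point (the paper builds $\psi$ equal to the identity near $p$ and to the given map outside a slightly larger disk --- the mirror image of your $h_0$), then apply Theorem~\ref{thm:dacorogna-moser} on an annular domain avoiding $p$ to restore the area form, concluding via isotopy-invariance of mapping tori. One caution on a detail you flag as routine: the claim that the bump-function cutoff of the Alexander isotopy is a diffeomorphism \emph{for $D_0,D_1$ small enough} is not right as stated, since shrinking the disks does not improve injectivity (the distortion of $x\mapsto\phi_{\chi(|x|)}(x)$ scales with the disks, as one sees already for $\phi_s$ a family of dilations), so this step should instead be carried out by cutting off the generating time-dependent vector field and integrating --- that is, by the isotopy extension theorem --- which yields the desired $h_0$ with no smallness hypothesis.
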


\begin{proof}
   Let $h\colon\Sigma\to \Sigma$ denote the diffeomorphism (where $\Sigma$ is not necessarily a surface, although this will be the relevant case for us). Consider a chart around $p\in \Sigma$. Take an open set $D_2$ around $p$ and an open set $D_1$ whose closure is strictly contained in $D_2\cap h(D_2)$. Complete the family of subsets with $D_0$ and $D_3$ to have four nested open sets $$p\in D_0\subset D_1\subset D_2 \subset D_3 \subset \Sigma.$$  We define two concentric ring-like sets $R\subset \Omega$ by
\begin{align*}
    R & :=D_2 \setminus \overline{D_1},& \Omega&:=D_3\setminus \overline{D_0}.
\end{align*}

By the choice of $D_1$ and $D_2$, we have $h(\Sigma\setminus D_2)\cap D_1=\emptyset$, so we can define a diffeomorphism $\psi\colon\Sigma\to\Sigma$ such that
\begin{align*}
\psi_{|D_1}&=\Id_{|D_1},& \psi_{|M\setminus \overline{D_2}}&=h_{|M\setminus \overline{D_2}}.
\end{align*}


We now correct $\psi$ to make it area-preserving. For this we apply  Theorem \ref{thm:dacorogna-moser}, where we implicitly use the preimage via the chart. We consider $\Omega$ with the real-valued functions $f:=1$ and $g:=\Jac \psi$. The hypotheses are satisfied: i) $fg>0$ as $\psi$ is a diffeomorphism ($g$ does not vanish) and $g$ takes positive values, ii) $\int_\Omega f=\int_\Omega g$, 
as $\int_\Sigma g=\int_\Sigma \Jac \psi =\int_{\psi(\Sigma)} 1 = \int_\Sigma 1$, and $f=g$ on $\Sigma\setminus \Omega$, and iii) $\supp(f-g)\subset \Omega$ because $f=g$ on $\Sigma\setminus R$. Consequently, there exists $\phi\in \Diff(\Omega)$ such that $\phi^* \Jac \psi=1$ on $\Omega$ and $\supp(\phi-\Id)\subset \bar{R}\subset \Omega$. By the latter, we can extend the map $\phi$ by $\phi_{|\Sigma\setminus\Omega}:=\Id$ and thus consider the map $\phi^* \psi=\psi\circ \phi^{-1}$, which satisfies $\Jac \phi^*\psi = \phi^* \Jac \psi=1$. 

As $\phi$ is the identity apart from a disk, $\phi^*\psi$ is isotopic to $\psi$, so their mapping torus are diffeomorphic,
$$M_{h} \cong  M_{\psi} \cong M_{\phi\circ \psi},$$
and $\phi\circ \psi_{|D_0}=\Id_{|D_0}.$ \end{proof}

\subsection{Two surgeries to obtain a $B_3$-structure}
\label{sec:two-surgeries-B3-structure}
Let $M$ be a closed orientable $3$-manifold. By Section \ref{sec:open-book-mapping-torus} it can be obtained by $(0,1)$-surgery  from a mapping torus $M_\phi$, where, by Sections \ref{sec:invariant-area-form} and \ref{sec:identitiy-around-fixed-point}, $\phi:\Sigma\to \Sigma$ fixes an open set around $p$ and preserves an invariant form $\omega$ on $\Sigma$.

Consider the cosymplectic structure on $M_\phi$ given by $1  + id\theta + i\omega - d\theta\wedge \omega$. Take two fixed points $p,q$ in the disk fixed by the diffeomorphism $\phi$. Consider the corresponding curves $\SSS_p$ and $\SSS_q$. Perform now two surgeries from Section~\ref{sec:p-1-surgery}: a $(0,1)$ surgery on $\SSS_p$ and a $(1,- 1)$ surgery on $\SSS_q$ in $M_\phi$. After the first surgery, the resulting manifold is $M$,
by Corollary~\ref{cororllary:every-manifold-surgery-mapping-torus}. We claim that the second surgery still yields $M$.
To prove the claim, notice that $\SSS_p$ and  $\SSS_q$
are parallel in $M_\phi$ (their union is the boundary of an embedded annulus $[0,1]\times S^1$). Then after the first surgery, this annulus can be extended to a disk (by adding a meridian disk of the first surgery torus), so $\SSS_q$ bounds an embedded disk in $M$.
A neighbourhood of this disk in $M$ is  homeomorphic to a ball, in which $\SSS_q$ is the unknot. This means that after the second surgery we obtain  the connected sum of $M$ with the $(1,- 1)$ surgery on the unknot in $S^3$. The  $(1,-1)$ surgery
on the unknot in $S^3$ yields again $S^3$ (this surgery is the lens space $L(1,-1)$, by \cite[Sec.~10.1.2]{martelli2022introduction},
which is homeomorphic to $L(1,0)\cong S^3$ \cite[Thm.~10.1.12,  Prop.~10.1.6]{martelli2022introduction}). Hence 
the second surgery  does not change the manifold~$M$.


     By the choice of the coefficients $q$ and Proposition \ref{prop:many-surgeries}, we have thus proved:  
\begin{theorem}\label{thm:every-closed-B3}
    Any closed orientable $3$-manifold admits a $B_3$-structure with exactly two type-change curves.
\end{theorem}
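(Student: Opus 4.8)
The plan is to assemble the machinery built up in Sections \ref{sec:surgery}--\ref{sec:identitiy-around-fixed-point} into a single construction; we may and do assume $M$ is connected. First I would invoke Corollary \ref{cororllary:every-manifold-surgery-mapping-torus} to realise $M$ as a $(0,1)$-surgery on a mapping torus $M_\phi$, where, by Sections \ref{sec:invariant-area-form} and \ref{sec:identitiy-around-fixed-point}, the monodromy $\phi\colon\Sigma\to\Sigma$ may be taken to preserve an area form $\omega$ and to be the identity on an open disk $D$ containing a point $p$. Since $\phi^*\omega=\omega$, the pair $(d\theta,\omega)$ descends to a cosymplectic structure on $M_\phi$, and I would equip $M_\phi$ with the corresponding cosymplectic $B_3$-structure $1+i\,d\theta+i\omega-d\theta\wedge\omega$ from Example \ref{ex:S2xS1}.

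Next I would choose a second point $q\in D$ and consider the two sections $\SSS_p,\SSS_q$ of the mapping torus over $p$ and $q$. These are disjoint closed curves transverse to $\ker d\theta$, lying entirely in the region where $\phi=\Id$, so each admits a product cosymplectic neighbourhood of exactly the type required in Section \ref{sec:p-1-surgery}. I would then perform, simultaneously, a $(0,+1)$-surgery along $\SSS_p$ and a $(1,-1)$-surgery along $\SSS_q$. The second coordinates $+1$ and $-1$ sum to zero, so the twisting-cancellation hypothesis of Proposition \ref{prop:many-surgeries} (applied with $r=1$) is met, and the proposition produces a genuine, untwisted stable $B_3$-structure on the resulting manifold $\widetilde M$, whose type-change locus consists of precisely the two curves introduced by the two surgeries.

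It then remains only to identify $\widetilde M$ diffeomorphically with $M$. By Corollary \ref{cororllary:every-manifold-surgery-mapping-torus} the first surgery alone already yields $M$, so I would argue that the balancing second surgery does not alter the diffeomorphism type. The key observation is that $\SSS_p$ and $\SSS_q$ are \emph{parallel} in $M_\phi$: since $\phi=\Id$ on $D$, an arc from $p$ to $q$ in $D$ sweeps out an embedded annulus $[0,1]\times\SSS^1$ bounded by $\SSS_p\cup\SSS_q$. After filling along $\SSS_p$, this annulus is capped by a meridian disk of the surgery solid torus, so $\SSS_q$ bounds an embedded disk and hence sits as an unknot inside a ball in $M$. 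Consequently the second surgery realises $\widetilde M$ as the connected sum of $M$ with the result of $(1,-1)$-surgery on the unknot in $\SSS^3$, and since that surgery gives $L(1,-1)\cong L(1,0)\cong\SSS^3$, the connected sum is again $M$.

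The main obstacle is the tension between two competing requirements. For the twisting in Proposition \ref{prop:many-surgeries} to cancel and leave an honest (untwisted) $B_3$-structure, the two surgery coefficients must pair as $+1$ and $-1$; but at the same time the combined topological effect must be trivial, so that one recovers the prescribed manifold $M$ and not some modification of it. Reconciling these is exactly what the parallel-unknot argument accomplishes: performing the mandatory $(1,-1)$-surgery on a curve that becomes null-bounding after the first filling turns it into a harmless connected sum with $\SSS^3$. Verifying that $\SSS_q$ indeed becomes unknotted in a ball—via the annulus-to-disk extension—and that $(1,-1)$-surgery on the unknot returns $\SSS^3$ is the delicate point on which the whole construction hinges.
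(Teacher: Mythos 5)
Your proposal is correct and follows essentially the same route as the paper: realise $M$ as a $(0,1)$-surgery on a mapping torus with area-preserving monodromy fixing a disk, endow it with the cosymplectic structure $1+i\,d\theta+i\omega-d\theta\wedge\omega$, and perform the paired $(0,1)$- and $(1,-1)$-surgeries along the parallel curves $\SSS_p$, $\SSS_q$, cancelling the twisting via Proposition \ref{prop:many-surgeries}. Your identification of the resulting manifold with $M$ — the annulus bounded by $\SSS_p\cup\SSS_q$ capped by a meridian disk, making $\SSS_q$ an unknot in a ball, so that the second surgery is a connected sum with $L(1,-1)\cong \SSS^3$ — is exactly the paper's argument.
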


A natural and interesting question arises: by analogy with the binding of an open-book decomposition in Theorem \ref{thm:open-book}, can we find examples of $B_3$-structures with a connected type-change locus? This and the study of the geometry and local invariants will be the subject of an upcoming work.

\bibliographystyle{alpha}
\bibliography{B3}

\bigskip 

\begin{small}

\noindent \textsc{J. Porti,  Universitat Aut\`onoma de Barcelona,
and Centre de Recerca Matem\`atica
08193 Barcelona, Spain }
\\
\noindent \textsf{joan.porti@uab.cat}

\medskip

\noindent \textsc{R. Rubio,  Universitat Aut\`onoma de Barcelona,
08193 Barcelona, Spain }
\\
\noindent \textsf{roberto.rubio@uab.es}

\end{small}

\end{document}